\title[Nonlinear commutators for the fractional ${p}$-Laplacian]{Nonlinear commutators for the fractional ${p}$-Laplacian and applications}
\author{Armin Schikorra}
\address{Armin Schikorra, Universit\"at Basel, {\tt armin.schikorra@unibas.ch}}
\def\eps{\varepsilon}
\def\N{{\mathbb N}}
\def\S{{\mathbb S}}
\newtheorem{theorem}{Theorem}
\newtheorem{lemma}[theorem]{Lemma}
\newtheorem{proposition}[theorem]{Proposition}
\theoremstyle{definition}
\def\diam{{\rm diam\,}}
\def\dist{{\rm dist\,}}
\def\lip{{\rm Lip\,}}
\def\supp{{\rm supp\,}}
\newcommand{\R}{\mathbb{R}}
\newcommand{\Z}{\mathbb{Z}}
\newcommand{\brac}[1]{\left (#1 \right )}
\newcommand{\Sw}{\mathcal{S}}
\newcommand{\barint}{
\rule[.036in]{.12in}{.009in}\kern-.16in \displaystyle\int }
\newcommand{\barcal}{\mbox{$ \rule[.036in]{.11in}{.007in}\kern-.128in\int $}}
\def\mvint_#1{\mathchoice
          {\mathop{\vrule width 6pt height 3 pt depth -2.5pt
                  \kern -8pt \intop}\nolimits_{\kern -3pt #1}}%
          {\mathop{\vrule width 5pt height 3 pt depth -2.6pt
                  \kern -6pt \intop}\nolimits_{#1}}%
          {\mathop{\vrule width 5pt height 3 pt depth -2.6pt
                  \kern -6pt \intop}\nolimits_{#1}}%
          {\mathop{\vrule width 5pt height 3 pt depth -2.6pt
                  \kern -6pt \intop}\nolimits_{#1}}}
\numberwithin{theorem}{section} \numberwithin{equation}{section}
\newcommand{\lap}{\Delta }
\newcommand{\aleq}{\precsim}
\newcommand{\aeq}{\approx}
\newcommand{\laps}[1]{(-\lap) ^{\frac{#1}{2}}}
\newcommand{\lapst}[1]{(-\lap) ^{#1}}
\newcommand{\lapms}[1]{I^{#1}}
\newcommand{\plaps}[3]{(-\lap)^{#1}_{#2,#3}}
\begin{document}

\sloppy

\subjclass[2010]{35D30, 35B45, 35J60, 47G20, 35S05, 58E20}
\sloppy


\begin{abstract}
We prove a nonlocal, nonlinear commutator estimate concerning the transfer of derivatives onto testfunctions. 
For the fractional $p$-Laplace operator it implies that solutions to certain degenerate nonlocal equations are higher differentiable. Also, weak fractional $p$-harmonic functions which a priori are less regular than variational solutions are in fact classical.
As an application we show that sequences of uniformly bounded $\frac{n}{s}$-harmonic maps converge strongly outside at most finitely many points.
\end{abstract}

\maketitle
\section{Introduction}
The fractional $p$-Laplacian of order $s \in (0,1)$ on a domain $\Omega \subset \R^n$, $\plaps{s}{p}{\Omega} u$ is a distribution given by
\[
 \plaps{s}{p}{\Omega} u[\varphi] := \int_{\Omega}\int_{\Omega} \frac{|u(x)-u(y)|^{p-2} (u(x)-u(y))\ (\varphi(x)-\varphi(y))}{|x-y|^{n+sp}}\ dx\ dy
\]
for $\varphi \in C_c^\infty(\Omega)$. It appears as the first variation of the $\dot{W}^{s,p}$-Sobolev norm
\[
 [u]_{W^{s,p}(\Omega)}^p := \int_{\Omega}\int_{\Omega} \frac{|u(x)-u(y)|^{p}}{|x-y|^{n+sp}}\ dx\ dy.
\]
In this sense it is related to the classical $p$-Laplacian
\[
 \lap_{p}u = \operatorname{div} (|\nabla u|^{p-2} \nabla u)
\]
which appears as first variation of the $\dot{W}^{1,p}$-Sobolev norm $\|\nabla u\|_{p}^p$.

If $p = 2$ the fractional $p$-Laplacian on $\R^n$ becomes the usual fractional Laplace operator
\[
 \lapst{s} f = \mathcal{F}^{-1}(c\ |\xi|^{2s} \mathcal{F} f),
\]
where $\mathcal{F}$ and $\mathcal{F}^{-1}$ denote the Fourier transform and its inverse, respectively. As a distribution
\[
 \lapst{s} f[\varphi] = \int_{\R^n} \lapst{s} f \varphi.
\]
For an overview on the fractional Laplacian and fractional Sobolev spaces we refer to, e.g., \cite{Hitchhiker,Hitchhiker2}.

Due to the degeneracy for $p \neq 2$, regularity theory for equations involving the $p$-Laplacian is quite delicate, for example $p$-harmonic functions may not be $C^2$. The fractional $p$-Laplacian has recently received quite some interest, for example we refer to  \cite{BjorlandCaffarelliFigalli12,CastroKuusiPalatucciLocalBehaviour,CastroKuusiPalatucciJFA14,SchikorraCPDE14,SireKuusiMingioneSelfImproving,SireKuusiMingioneFracGehring,IannizzottoMosconiSquassina,SireKuusiMingioneMeasureData,VazquezDirichletEvolution}. Higher regularity is one interesting and very challenging question where only very partial results are known, e.g. in \cite{BjorlandCaffarelliFigalli12} they obtain for $s \approx 1$ estimates in $C^{1,\alpha}$.

Our first result is a nonlinear commutator estimate for the fractional $p$-Laplacian. It measures how and at what price one can ``transfer'' derivatives to the testfunction. In the linear case $p=2$ this is just integration by parts: Let $c$ be the constant depending on $s$ and $\eps$ so that $\lapst{s+\eps} = c \lapst{\eps}\circ \lapst{s}$. Then for any testfunction $\varphi$,
\[
 \lapst{s+\eps} u[\varphi] = c\lapst{s} u[\lapst{\eps}\varphi]
\]
In the nonlinear case $p \neq 2$ (we shall restrict our attention for technical simplicity to $p \geq 2$) this is not true anymore. Instead we have 
\begin{theorem}\label{th:epsest}
Let $s \in (0,1)$, $p \in [2,\infty)$ and $\eps \in [0,1-s)$. Take $B \subset \R^n$ a ball or all of $\R^n$. Let $u \in W^{s,p}(B)$ and $\varphi \in C_c^\infty(B)$. For a certain constant $c$ depending on $s, \eps, p$, denote the nonlinear commutator
\[
 R(u,\varphi) := \plaps{s+\eps}{p}{B}u[\varphi] - c\plaps{s}{p}{B}u[\laps{\eps p}\varphi].
\]
Then we have the estimate
\[
 |R(u,\varphi)| \leq C\ \eps\ [u]_{W^{s+\eps,p}(B)}^{p-1} [\varphi]_{W^{s+\eps,p}(\R^n)}.
\]
\end{theorem}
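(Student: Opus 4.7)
The plan is to combine the two terms of $R(u,\varphi)$ into a single double integral over $B\times B$ with common denominator $|x-y|^{n+sp}$, extract the factor $[u]_{W^{s+\eps,p}(B)}^{p-1}$ by H\"older's inequality, and reduce everything to a scalar commutator estimate on $\varphi$ that carries the explicit $\eps$-smallness. Writing
\[
R(u,\varphi) = \iint_{B\times B} \frac{|u(x)-u(y)|^{p-2}(u(x)-u(y))}{|x-y|^{n+sp}}\, K(x,y)\, dx\, dy,
\]
with
\[
K(x,y) := \frac{\varphi(x)-\varphi(y)}{|x-y|^{\eps p}} - c\,\bigl(\laps{\eps p}\varphi(x) - \laps{\eps p}\varphi(y)\bigr),
\]
I would apply H\"older's inequality with conjugate exponents $p/(p-1)$ and $p$, distributing the weight $|x-y|^{-(n+sp)}$ so that the $u$-side carries $|x-y|^{-(n+(s+\eps)p)(p-1)/p}$, to arrive at
\[
|R(u,\varphi)| \leq [u]_{W^{s+\eps,p}(B)}^{p-1}\left(\iint_{B\times B} \frac{|K(x,y)|^p}{|x-y|^{n+sp-p(p-1)\eps}}\, dx\, dy\right)^{1/p}.
\]
A dimensional check confirms that the exponents are tuned so that a pointwise (or $L^p$-averaged) bound of the form $|K(x,y)| \leq C\eps\,|\varphi(x)-\varphi(y)|\,|x-y|^{-\eps p}$ yields exactly the desired inequality, since the combined weight $|x-y|^{-\eps p \cdot p}/|x-y|^{n+sp-p(p-1)\eps}$ simplifies to $|x-y|^{-(n+(s+\eps)p)}$, reproducing the Gagliardo seminorm $[\varphi]_{W^{s+\eps,p}(\R^n)}^p$.

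The remaining task is to prove this extra factor of $\eps$ in $K$. For this, I would invoke the singular integral representation
\[
\laps{\eps p}\varphi(x) = c_{n,\eps p}\,{\rm PV}\!\int_{\R^n} \frac{\varphi(x)-\varphi(z)}{|x-z|^{n+\eps p}}\, dz,
\]
where the normalizing constant $c_{n,\eps p}$ vanishes linearly as $\eps\to 0$, so that $\laps{\eps p}\varphi \to \varphi$ in the limit. The constant $c = c(s,\eps,p)$ is calibrated to satisfy $c(0)=1$ and so that the leading term of $c(\laps{\eps p}\varphi(x)-\laps{\eps p}\varphi(y))$ matches $(\varphi(x)-\varphi(y))/|x-y|^{\eps p}$. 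Substituting the PV formula, splitting the $z$-integral into the near-diagonal regions $\{|z-x|<|x-y|\}$, $\{|z-y|<|x-y|\}$ and the complementary far region, and regrouping so that the first-order behavior of $\varphi$ cancels, I would express $K(x,y)$ as a weighted integral of symmetric second-order differences of $\varphi$ multiplied either by the factor $c_{n,\eps p}\sim \eps$ or by Taylor remainders of $|x-y|^{-\eps p} = 1 - \eps p\log|x-y| + O(\eps^2)$, each contributing the required power of $\eps$.

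\textbf{The main obstacle} is executing this cancellation quantitatively while preserving the $L^p$-control that reproduces $[\varphi]_{W^{s+\eps,p}(\R^n)}$. The weighted integrand in the scalar estimate is supercritically singular on the diagonal, so the precise choice of $c$, the splitting of the $z$-integral, and the way logarithmic remainders in $\eps$ are absorbed into the fractional Sobolev seminorm must all conspire to produce exactly one clean power of $\eps$. A secondary difficulty is that $\laps{\eps p}\varphi$ lives on all of $\R^n$ while $\plaps{s}{p}{B}$ integrates only over $B\times B$; this mismatch is handled using $\varphi\in C_c^\infty(B)$ together with standard decay estimates for $\laps{\eps p}\varphi$ away from the support of $\varphi$.
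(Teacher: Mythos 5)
Your Hölder bookkeeping is correct, and your high-level strategy mirrors the paper's: the commutator vanishes identically at $\eps=0$, so differentiating in $\eps$ should produce a factor of $\eps$ together with logarithms, and Hölder with exponents $\frac{p}{p-1},p$ extracts $[u]_{W^{s+\eps,p}(B)}^{p-1}$ and leaves a weighted $L^p$ functional of $\varphi$. However, the crucial step — bounding that remaining functional by $\eps\,[\varphi]_{W^{s+\eps,p}(\R^n)}$ — is where your sketch has a genuine gap, and indeed the first mechanism you propose is wrong. The pointwise bound $|K(x,y)|\leq C\eps\,|\varphi(x)-\varphi(y)|\,|x-y|^{-\eps p}$ cannot hold: $K$ contains $\laps{\eps p}\varphi(x)-\laps{\eps p}\varphi(y)$, a nonlocal quantity that is not controlled by the local difference $\varphi(x)-\varphi(y)$ (take $x,y$ near a critical point of $\varphi$, so $\varphi(x)-\varphi(y)=O(|x-y|^2)$, while $\laps{\eps p}\varphi(x)-\laps{\eps p}\varphi(y)$ is only $O(|x-y|)$). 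Your hedge "$L^p$-averaged" is the right instinct, but then the dimensional check becomes purely formal and does not constitute an argument.

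The genuine difficulty, which you correctly flag as "the main obstacle," is that the $\eps$-derivative of the commutator produces a \emph{logarithmic} kernel — either $\log|x-y|$ from $|x-y|^{-\eps p}$, or the operator $\log|D|$ from $\partial_\eps\laps{\eps p}|_{\eps=0}$. These logarithms are unbounded and cannot be dispatched by near/far splitting of the $z$-integral plus Taylor remainders; one must show the resulting logarithmic potential is bounded from $W^{s,p}$ into the appropriate two-variable weighted $L^p$ space. This is precisely the content of Lemma~\ref{la:logest}, which the paper proves by Littlewood-Paley decomposition: both $T\varphi(x,y)$ and the weight $|x-y|^{-n-\gamma p}$ are decomposed dyadically (in $j$ for frequency, in $k$ for $|x-y|\approx 2^{-k}$), and the logarithm is absorbed by an $\eps$-shift in the Riesz potential order (the inequality $|\log t|\lesssim t^{\sigma}+t^{-\sigma}$), after which geometric decay in $|j-k|$ gives a Schur-type bound. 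Your plan contains no substitute for this. Also, your "regrouping into symmetric second-order differences" is off-target: the Gagliardo seminorm $[\varphi]_{W^{s+\eps,p}}$ with $s+\eps<1$ is a first-order difference quotient, and the commutator's first-difference structure must be preserved, not upgraded to second differences.

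Finally, a structural remark: you represent $\laps{\eps p}\varphi$ via the PV singular integral with constant $c_{n,\eps p}\sim\eps$. The paper instead writes $\varphi = c\,\lapms{t+\eps p}\laps{t+\eps p}\varphi$ (Riesz potential inversion), which turns both terms of the commutator into integrals of a single explicit, non-singular kernel $\kappa_\eps(x,y,z)$ against $\laps{t+\eps p}\varphi(z)$, and then the vanishing of $\kappa_0$ and the identity $\kappa_\eps=\int_0^\eps\frac{d}{d\delta}\kappa_\delta\,d\delta$ produce the factor $\eps$ cleanly. Your PV route must instead fight the fact that $c_{n,\eps p}\to 0$ is exactly compensated by the divergence of the PV integral as $\eps\to 0$, so the smallness is not simply extractable from the constant. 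The paper's representation is markedly cleaner for this reason, though your route is not inherently unworkable — it would, however, ultimately need the same Littlewood-Paley logarithmic estimate.
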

The fact that the $\eps$ appears in the estimate of $R(u,\varphi)$ is the main point in Theorem~\ref{th:epsest}. It relies  on a logarithmic potential estimate:
\begin{lemma}\label{la:logest}
Let for $\alpha, \beta \in (0,n)$,
\[
k(x,y,z) =\brac{|x-z|^{\alpha-n}\log\frac{|x-z|}{|x-y|}-|y-z|^{\alpha-n}\log\frac{|y-z|}{|x-y|}}.
\]
Let $\gamma \in (0,1)$, $p \in (1,\infty)$ and assume that $s:= \gamma + \beta - \alpha \in (0,1)$. We consider the following semi-norm expression for $\varphi \in C_c^\infty(\R^n)$
\[
   A(\varphi) := \brac{\int_{\R^n} \int_{\R^n}\left |\int_{\R^n} k(x,y,z)\ \laps{\beta} \varphi(z)\ dz \right |^p\ \frac{dx\ dy}{|x-y|^{n+\gamma p}}}^{\frac{1}{p}}.
\]
We have \[A(\varphi) \leq C [\varphi]_{W^{s,p}(\R^n)}.\]
\end{lemma}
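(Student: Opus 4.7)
The starting point is to substitute $z=x+w\zeta$ in the first term of $k$ and $z=y+w\zeta$ in the second, where $w:=|x-y|$. Both substitutions exploit the $\alpha$-homogeneity of $|\cdot|^{\alpha-n}$ together with $\log(|x-z|/|x-y|)=\log|\zeta|$, after which $T(x,y):=\int k(x,y,z)\laps{\beta}\varphi(z)\,dz$ collapses to
\[
 T(x,y)=w^{\alpha}\!\int_{\R^n}|\zeta|^{\alpha-n}\log|\zeta|\,\bigl(f(x+w\zeta)-f(y+w\zeta)\bigr)\,d\zeta,\qquad f:=\laps{\beta}\varphi.
\]
On the Fourier side, using $|u|^{\alpha-n}\log|u|=\partial_\alpha|u|^{\alpha-n}$ and $\widehat{|\cdot|^{\alpha-n}}=c_\alpha|\xi|^{-\alpha}$, this further rewrites as
\[
 T(x,y)=A_1(Q(x)-Q(y))+A_2\log|x-y|\,(Q(x)-Q(y))+A_3(L\varphi(x)-L\varphi(y)),
\]
where $Q$ equals a constant multiple of $\laps{\beta-\alpha}\varphi$ (or of the Riesz potential $\lapms{\alpha-\beta}\varphi$ if $\beta<\alpha$), $L\varphi:=\mathcal F^{-1}(\log|\xi|\cdot|\xi|^{\beta-\alpha}\hat\varphi)$, and a direct check gives the crucial algebraic identity $A_3=-c_\alpha A_2$.

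\textbf{The non-singular and logarithmic pieces.} The first term $A_1(Q(x)-Q(y))$ contributes $[Q]_{W^{\gamma,p}(\R^n)}$ to $A(\varphi)$, which by the standard mapping properties of $\laps{\beta-\alpha}$ (or of the Riesz potential) between fractional Sobolev spaces is bounded by $C\,[\varphi]_{W^{\gamma+\beta-\alpha,p}}=C\,[\varphi]_{W^{s,p}}$. For the remaining expression $A_2\log|x-y|\,(Q(x)-Q(y))+A_3(L\varphi(x)-L\varphi(y))$ I would apply Frullani's identity $\log t=\tfrac12\int_0^\infty s^{-1}(e^{-s}-e^{-st^2})\,ds$ simultaneously to $\log|x-y|$ in physical space and to $\log|\xi|$ inside the symbol of $L\varphi$. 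Thanks to $A_3=-c_\alpha A_2$, the unbounded $e^{-s}(Q(x)-Q(y))$-contributions coming from the two logarithms cancel \emph{exactly}, leaving the single integral
\[
 \frac{A_2}{2}\int_0^\infty \frac{1}{s}\Bigl(P_sQ(x)-P_sQ(y)-e^{-s|x-y|^2}(Q(x)-Q(y))\Bigr)\,ds,
\]
with $P_s:=e^{s\lap}$ the heat semigroup. The integrand vanishes to first order in $s$ at the origin and decays exponentially in $|x-y|$ at infinity; splitting at $s\sim|x-y|^{-2}$ and invoking standard pointwise heat-kernel estimates in each regime, together with the Besov/Triebel–Lizorkin characterization of $[Q]_{W^{\gamma,p}}$, delivers the desired bound by $[Q]_{W^{\gamma,p}}\lesssim[\varphi]_{W^{s,p}}$.

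\textbf{Main obstacle.} Neither of the two summands $\log|x-y|\,(Q(x)-Q(y))$ and $L\varphi(x)-L\varphi(y)$ is individually controlled by $[\varphi]_{W^{s,p}}$: on the critical scale both $\log|\xi|$ and $\log|x-y|$ fail to be $L^p$-bounded multipliers, so a triangle-inequality splitting is hopeless. Only the precise linear combination dictated by the kernel $k$ — reflected in the identity $A_3=-c_\alpha A_2$ — wipes out the unbounded $e^{-s}$-part of the Frullani representation and allows the estimate to close through heat-semigroup/Besov tools. Isolating and exploiting this cancellation is the conceptual heart of the lemma; the subsequent heat-kernel analysis, the splitting at $s\sim|x-y|^{-2}$, and the minor bookkeeping across the cases $\beta\gtrless\alpha$ (fractional Laplacian versus Riesz potential for $Q$) are technical but not conceptually new.
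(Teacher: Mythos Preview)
Your reduction to
\[
 T(x,y)=A_1\bigl(Q(x)-Q(y)\bigr)+A_2\log|x-y|\,\bigl(Q(x)-Q(y)\bigr)+A_3\bigl(L\varphi(x)-L\varphi(y)\bigr)
\]
is correct, and so is the observation that neither logarithmic piece is separately controlled by $[\varphi]_{W^{s,p}}$. The gap is in the ``crucial algebraic identity'' $A_3=-c_\alpha A_2$. Carrying out the computation you describe gives, with $d_\alpha$ the constant in $\widehat{|\cdot|^{\alpha-n}}=d_\alpha|\xi|^{-\alpha}$,
\[
 A_1=d_\alpha',\qquad A_2=-d_\alpha,\qquad A_3=-d_\alpha,
\]
so that $A_3=A_2$, not $A_3=-A_2$. (Both signs come from the same source: $\partial_\alpha|\xi|^{-\alpha}=-\log|\xi|\,|\xi|^{-\alpha}$ produces the $-d_\alpha$ in front of $L\varphi$, while $\log(|x-z|/|x-y|)=\log|x-z|-\log|x-y|$ produces the $-d_\alpha$ in front of $\log|x-y|$.) In the Frullani representation the two $e^{-s}(Q(x)-Q(y))$ contributions therefore \emph{add} rather than cancel, and what remains is not your clean integral
\[
 \frac{A_2}{2}\int_0^\infty \frac{1}{s}\Bigl(P_sQ(x)-P_sQ(y)-e^{-s|x-y|^2}\bigl(Q(x)-Q(y)\bigr)\Bigr)\,ds.
\]
In fact this last expression equals $\tfrac{A_2}{2}\bigl[\log|x-y|(Q(x)-Q(y))-(L\varphi(x)-L\varphi(y))\bigr]$, i.e.\ the \emph{difference} of the two logarithmic pieces, whereas $T$ contains their \emph{sum}. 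A quick scaling check shows the difference does not transform homogeneously under $Q\mapsto Q(\lambda\,\cdot)$ (an extra $-2\log\lambda\,(Q(x)-Q(y))$ appears), so it cannot be bounded by $[Q]_{W^{\gamma,p}}$; the sum, by contrast, scales correctly. Hence the mechanism you propose --- kill the $e^{-s}$-part and estimate the rest by heat-kernel bounds --- is applied to the wrong object, and as written the argument does not close.

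For comparison, the paper avoids any such cancellation entirely. It runs a Littlewood--Paley decomposition in $\varphi$ together with a dyadic decomposition in $|x-y|$, and handles the logarithm by the crude pointwise bound $|\log t|\aleq t^\sigma+t^{-\sigma}$ (for small $\sigma>0$), combined with a second estimate coming from the mean-value inequality $|g(x)-g(y)|\aleq|x-y|\,(\mathcal{M}|\nabla g|(x)+\mathcal{M}|\nabla g|(y))$. The two estimates give geometric decay in opposite directions of $k-j$, and the double sum closes. No Fourier-side cancellation, Frullani identity, or heat semigroup is used.
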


The additional factor $\eps$ in Theorem~\ref{th:epsest} facilitates estimates ``close to the differential order $s$''. More precisely 

\begin{theorem}\label{th:est1}
Let $s \in (0,1)$, $p \in [2,\infty)$, and a domain $\Omega \subset \R^n$, and $u \in W^{s,p}(\Omega)$ be a solution to $\plaps{s}{p}{\Omega} u = f$, i.e.
\[
 \int_{\Omega}\int_{\Omega} \frac{|u(x)-u(y)|^{p-2}(u(x)-u(y))\ (\varphi(x)-\varphi(y))}{|x-y|^{n+sp}}\ dx\ dy = f[\varphi]
\]
for all $\varphi \in C_c^\infty(\Omega)$.
Then there is an $\eps_0 > 0$ only depending on $s$, $p$, and $\Omega$, so that for $\eps \in (0,\eps_0)$ the following holds: If $f \in (W^{s-\eps(p-1),p}(\Omega))^\ast$ then $u \in W^{s+\eps,p}_{loc}(\Omega)$.

More precisely, we have for any $\Omega_1 \Subset \Omega$ a constant $C= C(\Omega_1,\Omega,s,p)$ so that
\[
[u]_{W^{s+\eps,p}(\Omega_1)} \leq C\ \|f\|_{(W_0^{s-\eps(p-1),p}(\Omega))^\ast} + C[u]_{W^{s,p}(\Omega)}.
\]

Also, by Sobolev imbedding, the higher differentiability $W^{s+\eps,p}_{loc}$ implies higher integrability i.e. $W^{s,p+\frac{pn}{n-\eps p}}_{loc}$-estimates.
\end{theorem}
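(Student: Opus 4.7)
The strategy is to control $[u]_{W^{s+\eps,p}(\Omega_1)}^p$ by testing $\plaps{s+\eps}{p}{}u$ against $u$ localized by a cutoff, using Theorem~\ref{th:epsest} to trade the extra $\eps$ of derivative from $u$ onto the testfunction, and then using the equation together with the duality pairing for $f$ to close the estimate. The exponent arithmetic matches precisely: applying $\laps{\eps p}$ turns a $W^{s+\eps,p}$ function into a $W^{s-\eps(p-1),p}$ function, which is exactly the dual space in which $f$ lives.

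Fix $\Omega_1 \Subset \Omega_0 \Subset \Omega$ and $\eta \in C_c^\infty(\Omega_0)$ with $\eta \equiv 1$ on $\Omega_1$. Since $\eta \equiv 1$ on $\Omega_1$, a standard fractional Caccioppoli-type expansion of $(u\eta^p)(x)-(u\eta^p)(y)$ gives
\[
 [u]_{W^{s+\eps,p}(\Omega_1)}^p \leq \plaps{s+\eps}{p}{\R^n} u[u\eta^p] + (\text{error involving } [u]_{W^{s,p}(\Omega)} \text{ and } \eta).
\]
Apply Theorem~\ref{th:epsest} with $\varphi = u\eta^p$ to obtain
\[
 \plaps{s+\eps}{p}{\R^n}u[u\eta^p] = c\,\plaps{s}{p}{\R^n}u\!\left[\laps{\eps p}(u\eta^p)\right] + R(u,u\eta^p),
\]
with $|R(u,u\eta^p)| \leq C\eps\,[u]_{W^{s+\eps,p}(\R^n)}^{p-1}\,[u\eta^p]_{W^{s+\eps,p}(\R^n)}$.

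Now split $\laps{\eps p}(u\eta^p) = \psi_{\mathrm{in}} + \psi_{\mathrm{tail}}$ where $\psi_{\mathrm{in}} \in C_c^\infty(\Omega)$ agrees with $\laps{\eps p}(u\eta^p)$ in a neighborhood of $\supp\eta$ and $\psi_{\mathrm{tail}}$ is the smooth decaying remainder. For the inside piece, use the equation
\[
 \plaps{s}{p}{\Omega}u[\psi_{\mathrm{in}}] = f[\psi_{\mathrm{in}}], \qquad |f[\psi_{\mathrm{in}}]| \leq \|f\|_{(W_0^{s-\eps(p-1),p}(\Omega))^\ast}\,[\psi_{\mathrm{in}}]_{W^{s-\eps(p-1),p}(\R^n)},
\]
and apply the mapping property $\laps{\eps p}\colon \dot W^{s+\eps,p}\to \dot W^{s-\eps(p-1),p}$ together with a fractional product rule to get $[\psi_{\mathrm{in}}]_{W^{s-\eps(p-1),p}} \lesssim [u]_{W^{s+\eps,p}(\Omega_0)} + C(\eta)[u]_{W^{s,p}(\Omega)}$. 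The nonlocal pieces (the contribution of $\psi_{\mathrm{tail}}$, and the error from replacing $\plaps{s}{p}{\R^n}$ by $\plaps{s}{p}{\Omega}$) are controlled using $\dist(\Omega_0,\partial\Omega)$ and decay in $[u]_{W^{s,p}(\Omega)}$.

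Combining everything yields, schematically,
\[
 [u]_{W^{s+\eps,p}(\Omega_1)}^p \leq C\eps\,[u]_{W^{s+\eps,p}(\Omega_0)}^p + C\|f\|_\ast\,[u]_{W^{s+\eps,p}(\Omega_0)}^{p-1} + C(\eta)[u]_{W^{s,p}(\Omega)}^p.
\]
Choosing $\eps_0$ so small that $C\eps_0 < \tfrac12$, a Simon/Giaquinta-type iteration over a nested family of domains between $\Omega_1$ and $\Omega_0$ absorbs the first term; Young's inequality handles the middle term, and the Sobolev embedding $W^{s+\eps,p}_{\loc}\hookrightarrow L^{np/(n-\eps p)}_{\loc}$ yields the asserted higher integrability. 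The two main obstacles are (i) justifying the absorption rigorously, since $[u]_{W^{s+\eps,p}}$ is only what we are trying to prove finite — this requires running the argument on a regularization $u_\tau$ of $u$ (e.g.\ a truncation of the singular integral) with constants uniform in $\tau$ and passing to the limit; and (ii) producing $\psi_{\mathrm{in}}\in C_c^\infty(\Omega)$ out of $\laps{\eps p}(u\eta^p)$, which is inherently nonlocal, and controlling the resulting tail term purely by the a priori $W^{s,p}(\Omega)$-energy of $u$.
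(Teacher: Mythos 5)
Your core mechanism is exactly the one the paper uses: a fractional Caccioppoli inequality to bring $\plaps{s+\eps}{p}{}u$ into play, Theorem~\ref{th:epsest} to trade the extra $\eps$ of differentiability onto the test function at the cost of a factor $\eps$, and then the equation in the dual form $|f[\psi]| \leq \|f\|_\ast\,[\psi]_{W^{s-\eps(p-1),p}}$ to close. The exponent arithmetic $s+\eps - \eps p = s - \eps(p-1)$ is also exactly what makes the paper's proof work.

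There are, however, two implementation differences worth flagging, one of which is a genuine gap. \emph{First}, by testing directly against $\varphi = u\eta^p$ you produce a test function that is only $W^{s,p}$, not $C_c^\infty$. After applying the commutator you then need to insert $\psi_{\mathrm{in}}$ (a cutoff of $\laps{\eps p}(u\eta^p)$) into the weak formulation $\plaps{s}{p}{\Omega}u[\cdot] = f[\cdot]$, which is only stated for $C_c^\infty(\Omega)$ test functions; you also invoke a fractional product rule for $[\psi_{\mathrm{in}}]_{W^{s-\eps(p-1),p}}$. Both of these steps need justification (density of $C_c^\infty$ in the relevant spaces, a Kato--Ponce type estimate) that you do not supply. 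The paper avoids the issue entirely: Lemma~\ref{la:lhs} encodes the Caccioppoli step as an estimate in terms of $\sup_{\varphi\in C_c^\infty,\, [\varphi]\leq 1} |\plaps{s+\eps}{p}{8B_k}u[\varphi]|$, so after the commutator the functions being manipulated are genuinely smooth and compactly supported; the cutoff decomposition $\laps{\eps p}\varphi = \psi + (1-\eta)\laps{\eps p}\varphi$ is then between smooth functions, with the far part controlled by the disjoint-support lemma and no product rule needed. \emph{Second}, you need a Simon/Giaquinta iteration over nested domains because your estimate has $[u]_{W^{s+\eps,p}(\Omega_0)}$ on the right and $[u]_{W^{s+\eps,p}(\Omega_1)}$ on the left with $\Omega_1\Subset\Omega_0$. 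The paper sidesteps this by first invoking the Localization Lemma~\ref{la:extension} to reduce to the case $\supp u \Subset \Omega$, after which a ball covering yields $\Gamma_{s+\eps}\aleq \delta^p \Gamma_{s+\eps} + \ldots$ with the \emph{same} domain on both sides, and absorption is immediate. Your point (i) about a priori finiteness of $[u]_{W^{s+\eps,p}}$ for the absorption to be legitimate is a real subtlety, but the paper is equally silent about it; the usual fix (truncating the kernel or mollifying $u$, with $\eps_0$ and constants uniform in the regularization) applies to either version of the argument.
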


In the regime $p=2$, a higher differentiability result similar to Theorem~\ref{th:est1} was proven by Kuusi, Mingione, and Sire \cite{SireKuusiMingioneSelfImproving}. It seems also possible to extend their approach to the case $p > 2$. Their argument is based on a generalization of Gehring's Lemma and it is also valid for nonlinear versions, see \cite{SireKuusiMingioneFracGehring}. Our method is similarly robust. Indeed one can show
\begin{theorem}\label{th:estv2}
Let $s \in (0,1)$, $p \in [2,\infty)$, and a domain $\Omega \subset \R^n$. Let $\phi: \R \to \R$ and $K(x,y)$ be a measurable kernel so that for some $C > 1$,
\[
 |\phi(t)|\leq C |t|^{p-1}, \quad \phi(t)t \geq |t|^{p} \quad \forall t \in \R,
\]
and
\[
 C^{-1} |x-y|^{-n-sp} \leq K(x,y) \leq C |x-y|^{-n-sp}.
\]
We consider for $u \in W^{s,p}(\Omega)$, the distribution $\mathcal{L}_{\phi,K,\Omega}(u)$
\[
 \mathcal{L}_{\phi,K,\Omega}(u)[\varphi] := \int_{\Omega}\int_{\Omega} K(x,y)\ \phi(u(x)-u(y))\ (\varphi(x)-\varphi(y))\ dx\ dy
\]
Then the conclusions of Theorem~\ref{th:est1} still hold if the fractional $p$-Laplace $\plaps{s}{p}{\Omega}$ is replaced with $\mathcal{L}_{\phi,K,\Omega}$. \end{theorem}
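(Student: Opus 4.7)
\emph{Proof plan for Theorem~\ref{th:estv2}.}
The strategy is to inspect the proof of Theorem~\ref{th:est1} and observe that it invokes only three structural features of $\plaps{s}{p}{\Omega}$: the size bound $\bigl||t|^{p-2}t\bigr| = |t|^{p-1}$, the coercivity identity $|t|^{p-2}t\cdot t = |t|^p$, and the explicit kernel $|x-y|^{-n-sp}$. The hypotheses of Theorem~\ref{th:estv2} replace these by two-sided bounds of the same shape; since the author himself has flagged the method as ``robust'', the whole argument should transfer with at most quantitatively worse constants.

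The first step is to establish a version of Theorem~\ref{th:epsest} for $\mathcal{L}_{\phi,K,\Omega}$. I would introduce the ``order $s+\eps$'' companion
\[
\mathcal{L}^{(\eps)}_{\phi,K,\Omega}(u)[\varphi] := \int_{\Omega}\int_{\Omega} K(x,y)\,|x-y|^{-\eps p}\, \phi(u(x)-u(y))\, (\varphi(x)-\varphi(y))\, dx\, dy
\]
and prove
\[
\bigl|\mathcal{L}^{(\eps)}_{\phi,K,\Omega}(u)[\varphi] - c\, \mathcal{L}_{\phi,K,\Omega}(u)[\laps{\eps p}\varphi]\bigr| \leq C\,\eps\, [u]_{W^{s+\eps,p}(\Omega)}^{p-1}\,[\varphi]_{W^{s+\eps,p}(\R^n)}.
\]
Following the proof of Theorem~\ref{th:epsest}, H\"older's inequality separates the $\phi$-factor, where only the upper bound $|\phi(t)| \leq C|t|^{p-1}$ is used, from the $\varphi$-factor, which after an integral manipulation reduces to Lemma~\ref{la:logest}. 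Lemma~\ref{la:logest} is kernel-agnostic beyond the size $|x-y|^{-n-sp}$, so the two-sided comparability of $K$ costs only multiplicative constants.

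With the generalized commutator in hand, the derivation of Theorem~\ref{th:est1} should run verbatim: test $\mathcal{L}_{\phi,K,\Omega}(u) = f$ against $\laps{\eps p}(\eta^p u)$ for a suitable cutoff $\eta$, use the commutator to transfer the $\eps p$ differentiability onto the kernel, bound the action of $f$ by $\|f\|_{(W^{s-\eps(p-1),p}_0(\Omega))^\ast}$ against an appropriate Sobolev norm of $\laps{\eps p}(\eta^p u)$, and absorb the commutator remainder on the left via the coercivity $\phi(t)t\cdot K(x,y)|x-y|^{-\eps p} \geq C^{-1}\, |t|^p\,|x-y|^{-n-(s+\eps)p}$. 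The main obstacle I anticipate is verifying that the crucial $\eps$-prefactor in the commutator estimate survives the generalization; this gain is structurally tied to the logarithmic comparison in Lemma~\ref{la:logest} and depends only on the size bound on $K$, so it should remain intact, and the final choice of $\eps_0$ will depend on the comparison constant $C$ in the hypotheses.
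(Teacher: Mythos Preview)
Your proposal is correct and matches the paper's own stance: the paper gives no separate proof of Theorem~\ref{th:estv2}, stating only that ``the arguments for Theorem~\ref{th:estv2} follow closely the proof of Theorem~\ref{th:est1}'' and leaving it as an exercise. Your identification of what is actually used---the size bound $|\phi(t)|\le C|t|^{p-1}$ in the H\"older step of Theorem~\ref{th:epsest}, the coercivity $\phi(t)t\ge |t|^p$ together with the lower bound on $K$ for the Caccioppoli-type inequality, and the fact that the $\eps$-gain comes from Lemma~\ref{la:logest}, which only sees the kernel size---is exactly right.

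One organizational remark: the paper's proof of Theorem~\ref{th:est1} does not literally test the equation with $\laps{\eps p}(\eta^p u)$. Instead it first invokes Lemma~\ref{la:lhs} at level $s+\eps$ to bound $[u]_{W^{s+\eps,p}}$ by a supremum of $\plaps{s+\eps}{p}{8B_k}u[\varphi]$ over normalized test functions, and only then applies the commutator to pass to $\plaps{s}{p}{8B_k}u[\laps{\eps p}\varphi]$, followed by a cutoff of $\laps{\eps p}\varphi$ to restore compact support. Your direct-testing description amounts to the same thing once you unwind Lemma~\ref{la:lhs}, and the generalized version of that lemma uses precisely the coercivity you isolate; so the difference is cosmetic rather than substantive.
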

Since the arguments for Theorem~\ref{th:estv2} follow closely the proof of Theorem~\ref{th:est1}, we leave this as an exercise to the interested reader.

There is also a reminiscent result to Theorem~\ref{th:est1} the usual $p$-Laplace: A nonlinear potential estimate due to Iwaniec \cite{Iw92}. It implies that for $u$ with $\supp u \subset \Omega$ there are maps $v$, $R$, so that
\[
 |\nabla u|^{\eps} \nabla u = \nabla v + R,
\]
with $\|\nabla v\|_{\frac{q}{1+\eps},\Omega} \aleq \|\nabla u\|^{1+\eps}_{q,\Omega}$ for all $q$ and 
\[
 \|R\|_{\frac{p+\eps}{1+\eps},\Omega} \aleq \eps \|\nabla u\|^{1+\eps}_{p+\eps,\Omega}.
\]
In this situation, the additional $\eps$ in the last estimate allows for estimates ``close to the \emph{integrability} order $p$''. Indeed
\[
 \|\nabla u\|^{p+\eps}_{p+\eps, \Omega} = \int_{\Omega} |\nabla u|^{p-2} \nabla u \nabla v + \int_{\Omega} |\nabla u|^{p-2} \nabla u R,
\]
and thus,
\[
 \|\nabla u\|_{p+\eps, \Omega}^{p+\eps} \aleq |\lap_p u[v]| + \eps \|\nabla u\|_{p+\eps,\Omega}^{p-1}\ \|\nabla u\|^{1+\eps}_{p+\eps,\Omega}.
\]
In particular, if $\eps$ is small enough and $\lap_p u$ is in $(W_0^{1,\frac{p+\eps}{1+\eps}}(\Omega))^\ast$, then $u \in W^{1,p+\eps}(\Omega)$.

The commutator estimate in Theorem~\ref{th:epsest} also allows to estimate very weak solutions - i.e. solutions whose initial regularity assumptions are below the variationally natural regularity:

In the local regime, the distributional $p$-Laplacian $\lap_p u[\varphi]$ is well defined for $\varphi \in C_c^\infty(\Omega)$ whenever $u \in W^{1,p-1}_{loc}(\Omega)$. The variationally natural regularity assumption is however $W^{1,p}$, since $\lap_p$ appears as first variation of $\|\nabla u \|_{p,\Omega}^p$. For the $p$-Laplacian, Iwaniec and Sbordone \cite{IwaniecSbordone94} showed that some weak $p$-harmonic functions are in fact classical variational solutions:
\begin{theorem}[Iwaniec-Sbordone]
For any $p \in (1,\infty)$, $\Omega \subset \R^n$, there are exponents $1 < r_1 < p < r_2 < \infty$ so that every (weakly) $p$-harmonic map,
\[
 \lap_p u = 0,
\]
satisfying $u \in W^{1,r_1}_{loc}(\Omega)$ indeed belongs to $W^{1,r_2}_{loc}(\Omega)$.
\end{theorem}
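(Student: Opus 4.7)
The plan is to combine two complementary halves of regularity improvement that together sandwich the variational exponent $p$. The harder half improves $W^{1,r_1}_{\loc}$ with $r_1<p$ up to $W^{1,p}_{\loc}$ and genuinely relies on Iwaniec's nonlinear Hodge-type decomposition recalled in the introduction. The easier half improves $W^{1,p}_{\loc}$ further to $W^{1,r_2}_{\loc}$ with $r_2>p$ and is the classical reverse H\"older/Gehring scheme.

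For the upper half, I would test $\lap_p u=0$ against $\varphi=\eta^p(u-c)$, where $\eta\in C_c^\infty(B(x_0,2r))$ is a cutoff equal to one on $B(x_0,r)$ and $c$ is the mean of $u$ on $B(x_0,2r)$. Young's inequality yields the Caccioppoli bound
\[
 \int|\nabla u|^p\eta^p \aleq r^{-p}\int_{B(x_0,2r)}|u-c|^p,
\]
and combined with Sobolev-Poincar\'e this produces a reverse H\"older inequality
\[
 \brac{\mvint_{B(x_0,r)}|\nabla u|^p}^{\frac{1}{p}} \aleq \brac{\mvint_{B(x_0,2r)}|\nabla u|^q}^{\frac{1}{q}}
\]
for some $q<p$. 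Gehring's lemma then produces some $r_2>p$ with $u\in W^{1,r_2}_{\loc}(\Omega)$.

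For the lower half, fix a cutoff $\eta\in C_c^\infty(\Omega)$ and set $w:=u\eta\in W^{1,r_1}$ with compact support. Apply Iwaniec's decomposition to $w$ at the negative exponent $\eps:=r_1-p\in(-1,0)$, giving
\[
 |\nabla w|^\eps\nabla w=\nabla v+R,
\]
where $v$ is compactly supported in $\Omega$ and the crucial remainder estimate reads $\|R\|_{r_1/(1+\eps)}\aleq|\eps|\,\|\nabla w\|_{r_1}^{1+\eps}$. Since $v$ is admissible in the weak form $\lap_p u[v]=0$, the decomposition rewrites the equation as
\[
 \int|\nabla u|^{p-2}\nabla u\cdot|\nabla w|^\eps\nabla w \;=\; \int|\nabla u|^{p-2}\nabla u\cdot R.
\]
Expanding $\nabla w=\eta\nabla u+u\nabla\eta$ on the left isolates the principal term $\int\eta^{1+\eps}|\nabla u|^{p+\eps}$, and H\"older applied to the right with conjugate exponents $r_1/(p-1)$ and $r_1/(1+\eps)$, together with the bound on $R$, gives
\[
 \int\eta^{1+\eps}|\nabla u|^{r_1} \aleq |\eps|\,\|\nabla u\|_{r_1,\supp\eta}^{r_1} + (\text{lower order in } u\nabla\eta).
\]
For $|\eps|$ small enough (equivalently $r_1$ close enough to $p$), iterating this estimate on a sequence of concentric cutoffs via a Widman-style hole-filling argument promotes the conclusion to $u\in W^{1,p}_{\loc}(\Omega)$.

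The main obstacle is precisely this lower half. One has to verify that the $|\eps|$-factor in Iwaniec's remainder really does survive all intermediate H\"older manipulations and produce an absorbable coefficient after the hole-filling iteration; and one has to control the cutoff error $u\nabla\eta$, typically by truncating $u$ at a large threshold (without loss of generality) or by iterating over shrinking annuli so that only the contribution on a thin shell enters. Once $u\in W^{1,p}_{\loc}$ is secured, the upper half is standard.
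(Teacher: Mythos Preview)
The paper does not prove this theorem; it is quoted as a classical result of Iwaniec and Sbordone \cite{IwaniecSbordone94} and serves purely as motivation for the fractional analogue, Theorem~\ref{th:veryweak}. There is thus no ``paper's own proof'' to compare against.

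That said, your sketch is essentially the original Iwaniec--Sbordone argument. The upper half (Caccioppoli $\Rightarrow$ reverse H\"older $\Rightarrow$ Gehring) is standard. The lower half is the genuine content: applying Iwaniec's nonlinear Hodge decomposition at the \emph{negative} exponent $\eps=r_1-p$, testing the very weak equation with the resulting $v$ (which lands in the correct dual space $W^{1,r_1/(r_1-p+1)}_0$), and absorbing via the smallness factor $|\eps|$ in the remainder estimate. Your outline is correct, including the identification of the two actual technical points --- making the $|\eps|$-absorption survive the H\"older steps, and controlling the cutoff error $u\nabla\eta$ through an iteration over concentric balls. Both are carried out in the cited reference along the lines you describe.
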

Again, while the $p$-Laplace improves its solution's \emph{integrability}, the fractional $p$-Laplace improves its solution's \emph{differentiability}. The distributional fractional $p$-Laplace $\plaps{s}{p}{\Omega}u[\varphi]$ is well defined for $\varphi \in C_c^\infty(\Omega)$ whenever $u \in W^{q,p-1}(\Omega)$ for any $q > 0$ with $q \geq (\frac{sp-1}{p-1})_+$. We have
\begin{theorem}\label{th:veryweak}
For any $s \in (0,1)$ $p \in (2,\infty)$, $\Omega \subset \R^n$, there are exponents $1 < r_1 < p < r_2 < \infty$ and $t_1 < s < t_2$ so that every (weakly) $s$-$p$-harmonic map,
\[
 \plaps{s}{p}{\Omega} u = 0,
\]
satisfying $u \in W^{t_1,r_1}(\Omega)$ indeed belongs to $W^{t_2,r_2}_{loc}(\Omega)$.
\end{theorem}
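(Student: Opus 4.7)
The argument naturally splits into two stages. Once we show that any very weak solution of $\plaps{s}{p}{\Omega}u = 0$ with $u \in W^{t_1,r_1}(\Omega)$ belongs to the variationally natural space $W^{s,p}_{loc}(\Omega)$, Theorem~\ref{th:est1} applied with $f \equiv 0$ (which is trivially in $(W^{s-\eps(p-1),p}(\Omega))^\ast$) produces $u \in W^{s+\eps_0,p}_{loc}(\Omega)$ for some $\eps_0 > 0$. Interpolating this bound with the already-established $W^{s,p}_{loc}$-bound and invoking the Sobolev embedding $W^{s+\eps_0,p}\hookrightarrow W^{s,r_2}$ with $r_2 = pn/(n-\eps_0 p) > p$ yields exponents $t_2 > s$ and $r_2 > p$ with the desired property. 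The nontrivial content is therefore the first stage: lifting $u$ from $W^{t_1,r_1}$ to $W^{s,p}_{loc}$.

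For this lifting I mirror the Iwaniec--Sbordone scheme, replacing their nonlinear Hodge decomposition by a shifted version of the commutator estimate Theorem~\ref{th:epsest}. Concretely, for $B \Subset \Omega$ and small $\eps > 0$ I need the identity
\[
 \plaps{s}{p}{B}u[\varphi] = c\, \plaps{s-\eps}{p}{B}u\bigl[\laps{\eps p}\varphi\bigr] + R_\eps(u,\varphi),
\]
with $|R_\eps(u,\varphi)| \leq C\eps\,[u]_{W^{s,p}(B)}^{p-1}\,[\varphi]_{W^{s,p}(\R^n)}$; the proof should follow exactly the lines of Theorem~\ref{th:epsest} with Lemma~\ref{la:logest} applied for parameters tuned so that $\gamma+\beta-\alpha = s - \eps$. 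Testing the equation $\plaps{s}{p}{\Omega}u = 0$ against $\varphi = u\eta$ for a cutoff $\eta$ concentrated on $B'\Subset B$, using the shifted commutator to rewrite the resulting pairing, and controlling the $\plaps{s-\eps}{p}{B}u$-term directly by $[u]_{W^{s-\eps,p}(\Omega)}^{p-1}[u\eta]_{W^{s-\eps,p}(\R^n)}$ (which is finite as soon as $(t_1,r_1)$ is close enough to $(s,p)$, thanks to Sobolev embedding), should yield a Caccioppoli-type inequality
\[
 [u]_{W^{s,p}(B')}^p \leq C\eps\,[u]_{W^{s,p}(B)}^p + C\bigl(1 + [u]_{W^{s-\eps,p}(\Omega)}^p\bigr).
\]
The standard hole-filling and concentric-balls iteration then absorbs the first right-hand term (using $C\eps < 1$ for $\eps$ small) and gives $u \in W^{s,p}_{loc}(\Omega)$.

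The main obstacle in executing this plan is establishing the shifted commutator estimate rigorously \emph{below} the variational threshold: one must verify the absolute convergence of all double integrals using only the regularity of $u$ available a priori, and---crucially---confirm that the remainder bound for $R_\eps$ carries the factor $\eps$ on exactly the $W^{s,p}$-norm that needs to be absorbed. This is where Lemma~\ref{la:logest} plays its decisive role, since the quantitative dependence on $\eps$ in Theorem~\ref{th:epsest} is inherited directly from that logarithmic potential bound. Once the shifted commutator is in place, the remaining absorption/iteration and the subsequent passage to higher differentiability via Theorem~\ref{th:est1} are essentially routine.
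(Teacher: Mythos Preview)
The paper itself gives no proof of this theorem; it simply remarks that ``the arguments for Theorem~\ref{th:veryweak} are quite similar to the ones in Theorem~\ref{th:est1}, and we shall skip them.'' Your two--stage plan---first lift $u$ to $W^{s,p}_{loc}$, then invoke Theorem~\ref{th:est1} with $f\equiv 0$---is precisely the intended structure, and your treatment of the second stage is correct.

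There is, however, a genuine gap in your first stage. After applying the shifted commutator with $\varphi=u\eta$, the test function entering the $(s-\eps)$--Laplacian is $\laps{\eps p}(u\eta)$, not $u\eta$. The direct H\"older bound therefore reads
\[
 \bigl|\plaps{s-\eps}{p}{B}u[\laps{\eps p}(u\eta)]\bigr|
 \le [u]_{W^{s-\eps,p}(B)}^{p-1}\,[\laps{\eps p}(u\eta)]_{W^{s-\eps,p}(B)},
\]
and the last factor is comparable to $[u\eta]_{W^{s+\eps(p-1),p}}$, i.e.\ it demands \emph{more} differentiability than the $W^{s,p}$--norm you are trying to bound, not less. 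Thus the term cannot be controlled by $[u]_{W^{s-\eps,p}(\Omega)}^{p-1}[u\eta]_{W^{s-\eps,p}}$ as you assert, and the claimed Caccioppoli inequality does not follow. Trying to undo this by applying the commutator once more (to return from $\plaps{s-\eps}{p}{B}u$ to $\plaps{s}{p}{B}u$, where the equation lives) is circular: $\lapms{\eps p}\laps{\eps p}(u\eta)=u\eta$ brings you back to the pairing you started from.

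The route that actually parallels Section~\ref{s:th:est1} is to run that proof with $(s,s+\eps)$ replaced by $(s-\eps,s)$: apply Lemma~\ref{la:lhs} at level $s$, so that one must estimate $\sup_\varphi|\plaps{s}{p}{4B}u[\varphi]|$ over smooth $\varphi\in C_c^\infty(2B)$ with $[\varphi]_{W^{s,p}}\le 1$. For such $\varphi$ the equation gives $\plaps{s}{p}{\Omega}u[\varphi]=0$ directly (no density argument with $u\eta$ is needed), and only the domain--change tail $\plaps{s}{p}{4B}u[\varphi]-\plaps{s}{p}{\Omega}u[\varphi]$ remains; since that tail integrates over separated sets, the kernel can be weakened from level $s$ to level $s-\eps$ at the cost of a constant depending on the ball, giving a bound by $[u]_{W^{s-\eps,p}(\Omega)}^{p-1}$. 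The role of the commutator and its $\eps$--factor is then exactly as in Theorem~\ref{th:est1}: it is what makes the absorption of $[u]_{W^{s,p}}$ legitimate once one sets up the a~priori argument (or an approximation scheme) below the variational threshold.
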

The arguments for Theorem~\ref{th:veryweak} are quite similar to the ones in Theorem~\ref{th:est1}, and we shall skip them.

Let us state an important application of Theorem~\ref{th:est1}: It is concerning degenerate fractional harmonic maps into spheres $\S^N \subset \R^{N+1}$: In \cite{SchikorraCPDE14} we proved that for $s \in (0,1)$ critical points of the energy
\[
 \mathcal{E}_{s} (u) := \int_{\Omega} \int_{\Omega} \frac{|u(x)-u(y)|^{\frac{n}{s}}}{|x-y|^{n+s \frac{n}{s}}}\ dx\ dy, \quad u: \Omega \subset \R^n \to \S^N
\]
are H\"older continuous. Indeed, together with Theorem~\ref{th:est1} the estimates in \cite{SchikorraCPDE14} impAly a sharper result
\begin{theorem}[$\eps$-regularity for fractional harmonic maps]\label{th:higherreg}
For any open set $\Omega \subset \R^n$ there is a $\delta > 0$ so that for any $\Lambda > 0$ there exists $\eps > 0$ and the following holds: Let $u \in W^{s,\frac{n}{s}}(\Omega,\S^N)$ with
\begin{equation}\label{eq:uwsns}
 [u]_{W^{s,\frac{n}{s}}(\Omega)} \leq \Lambda
\end{equation}
be a critical point of $\mathcal{E}_s(u)$, i.e.
\begin{equation}\label{eq:criticalpt}
 \frac{d}{dt}\Big |_{t = 0} \mathcal{E}_s\brac{\frac{u+t\varphi}{|u+t\varphi|}} = 0 \quad \forall \varphi \in C_c^\infty(\Omega,\R^N).
\end{equation}
If on a ball $2B \subset \Omega$ we have
\begin{equation}\label{eq:smalluloc}
 [u]_{W^{s,\frac{n}{s}}(2B)} \leq \eps,
\end{equation}
then on the ball $B$ (the ball concentric to $2B$ with half the radius),
\[
 [u]_{W^{s+\delta,\frac{n}{s}}(B)} \leq C_{\Lambda,B}.
\]
\end{theorem}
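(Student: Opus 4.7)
Set $p=n/s$. The plan is to treat Theorem~\ref{th:higherreg} as a regularity bootstrap: derive the Euler--Lagrange system for sphere-valued critical points, cast it in the form $\plaps{s}{p}{\Om} u = f_u$, check that $f_u$ lies in the dual $(W_0^{s-\eps(p-1),p}(\Om))^\ast$ for some small $\eps>0$, and then invoke Theorem~\ref{th:est1} to conclude $u\in W^{s+\eps,p}_{loc}$.

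First I would Taylor-expand $\frac{u+t\varphi}{|u+t\varphi|}$ at $t=0$; since $|u|\equiv 1$, this gives the admissible tangential variation $\varphi-(u\cdot\varphi)u$. Inserting it into \eqref{eq:criticalpt} and using the identities $(u(x)-u(y))\cdot u(x)=\tfrac12|u(x)-u(y)|^2$ and $(u(x)-u(y))\cdot u(y)=-\tfrac12|u(x)-u(y)|^2$, both immediate from $|u|^2\equiv 1$, yields
\[
 \plaps{s}{p}{\Om}u[\varphi] = \frac{1}{2}\int_{\Om}\int_{\Om}\frac{|u(x)-u(y)|^{p}}{|x-y|^{n+sp}}\bigl(u(x)\cdot\varphi(x)+u(y)\cdot\varphi(y)\bigr)\,dx\,dy.
\]
Call the right-hand side $f_u[\varphi]$. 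Thus $u$ solves $\plaps{s}{p}{\Om}u=f_u$ in the sense of Theorem~\ref{th:est1}.

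The second, and by far most delicate, step is to place $f_u$ --- localized to $2B$ by a cutoff $\eta\in C_c^\infty(2B)$ with $\eta\equiv 1$ on $B$ --- into $(W_0^{s-\eps(p-1),p}(\Om))^\ast$ with a bound of size $C(\Lambda,B)$. A naive estimate only controls $f_u$ in $(W^{s,p})^\ast$, which Theorem~\ref{th:est1} cannot digest; one must trade some differentiability of $u$ for extra differentiability of $\varphi$. I would do this by rewriting the integrand in the three-commutator form $|u(x)-u(y)|^{p-2}(u(x)-u(y))\cdot(u(x)-u(y))\bigl((u\cdot\varphi)(x)+(u\cdot\varphi)(y)\bigr)$ and exploiting the antisymmetric cancellation already identified in the H\"older-regularity argument of \cite{SchikorraCPDE14}. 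Lemma~\ref{la:logest} then absorbs the extra logarithmic $\eps$-factors produced when differentiability is transferred as in Theorem~\ref{th:epsest}, while the smallness assumption \eqref{eq:smalluloc} furnishes the small prefactor needed to absorb the remaining critical term into the left-hand side; tails from outside $2B$ are harmless thanks to \eqref{eq:uwsns}.

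Once this dual estimate is established, Theorem~\ref{th:est1} applied on an intermediate ball between $B$ and $2B$ delivers $u\in W^{s+\eps,p}_{loc}(B)$ together with the quantitative bound claimed, so $\delta:=\eps$ does the job. The main obstacle is the dual estimate of Step~2: it is the nonlocal analogue of finding Rivi\`ere-type antisymmetric structure for fractional harmonic maps, and is precisely why the smallness hypothesis \eqref{eq:smalluloc} is essential --- without it, the critical term has no small coefficient to be absorbed by.
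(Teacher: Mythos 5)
Your overall architecture --- derive the Euler--Lagrange equation for sphere-valued critical points, express it as $\plaps{s}{p}{\Omega}u = f_u$, and feed a dual bound on $f_u$ into Theorem~\ref{th:est1} --- matches the paper's, and your derivation of the right-hand side $f_u[\varphi]=\tfrac12\int\int |u(x)-u(y)|^{p}(u(x)\cdot\varphi(x)+u(y)\cdot\varphi(y))|x-y|^{-n-sp}\,dx\,dy$ is correct. But the step you yourself flag as ``by far most delicate'' --- showing $f_u \in (W_0^{s-\eps(p-1),p}(\Omega))^\ast$ for some strictly positive $\eps$ --- is not actually carried out, and the mechanism you sketch for it does not work as described. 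Invoking Lemma~\ref{la:logest} to ``absorb logarithmic $\eps$-factors'' is misplaced: that lemma is used internally in the proof of Theorem~\ref{th:epsest}, hence already inside the black box of Theorem~\ref{th:est1}; it does not help you estimate the specific nonlinear right-hand side $f_u$. Likewise, the smallness hypothesis~\eqref{eq:smalluloc} is not consumed by ``absorbing a critical term into the left-hand side'' at this stage; it is consumed further upstream.

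What the paper actually does is quite different and does not run through an $\eps$-absorption of $f_u$. It introduces the potential $T_{t,B}u(z)$ from \cite{SchikorraCPDE14}, for which the cited $\eps$-regularity machinery (this is where the smallness~\eqref{eq:smalluloc} and the antisymmetric sphere structure are consumed) provides Morrey-type decay $\|T_{t_0,B_{\gamma_1\rho}}u\|_{\frac{n}{n-t_0},B_\rho}\aleq\rho^\sigma$. Proposition~\ref{pr:Tt0morrey} removes the ball-dependence so $T_{t_0,B}u$ truly lies in a local Morrey space. The crucial differentiability gain then comes from the identity $T_{t_1,B}u = \lapms{t_1-t_0}T_{t_0,B}u$ combined with \emph{Adams' Riesz-potential estimate on Morrey spaces}: this yields $T_{t_1,B}u\in L^{p_1}_{loc}$ for some $p_1 > \frac{n}{n-t_1}$. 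Pairing via $\plaps{s}{n/s}{B}u[\varphi]=\int T_{t_1,B}u\,\laps{t_1}\varphi$ then gives $|\plaps{s}{n/s}{B}u[\varphi]|\aleq C_\Lambda[\varphi]_{W^{s+t_1-n/p_1',\,n/s}}$ with $s+t_1-\frac{n}{p_1'}<s$, which is precisely the dual estimate with $t<s$ that Theorem~\ref{th:est1} requires. Without this Morrey/Adams step, there is no source of the strict gain; so your Step~2 is a genuine gap rather than a different route to the same estimate.
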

This kind of $\eps$-regularity estimate is crucial for compactness and bubble analysis for fractional harmonic maps. Da Lio obtained quantization results \cite{DaLioBubbles} for $n=1$ and $s=\frac{1}{2}$. With the help of Theorem~\ref{th:higherreg} one can extend her \emph{compactness} estimates to all $s \in (0,1)$, $n \in \N$. More precisely, we have the following result extending the first part of \cite[Theorem~1.1]{DaLioBubbles}.
\begin{theorem}\label{th:aecompactness}
Let $u_k \in \dot{W}^{s,\frac{n}{s}}(\R^n,\S^{N-1})$ be a sequence of $(s,\frac{n}{s})$-harmonic maps in the sense of \eqref{eq:criticalpt} such that
\[
 [u_k]_{W^{s,\frac{n}{s}}(\R^n,\S^{N-1})} \leq C.
\]
Then there is $u_\infty \in \dot{W}^{s,\frac{n}{s}}(\R^n,\S^{N-1})$ and a possibly empty set $\{\alpha_1,\ldots,\alpha_l\}$ such that up to a subsequence we have strong convergence away from $\{\alpha_1,\ldots,\alpha_l\}$, that is
\[
 u_k \xrightarrow{k \to \infty} u_\infty \quad \mbox{in $W^{s,\frac{n}{s}}_{loc}(\R^n \backslash \{\alpha_1,\ldots,\alpha_l\})$}.
\]
\end{theorem}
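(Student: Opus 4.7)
The plan is a standard concentration-compactness argument in which Theorem~\ref{th:higherreg} supplies the $\eps$-regularity step, and the Rellich-type compact embedding $W^{s+\delta,n/s}\hookrightarrow W^{s,n/s}$ on bounded domains promotes weak to strong convergence.

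First I would extract a weak and almost-everywhere subsequential limit: the uniform bound $[u_k]_{W^{s,n/s}(\R^n)}\leq C$ together with $|u_k|\equiv 1$, Banach--Alaoglu, and the compact embedding $W^{s,n/s}_{loc}\hookrightarrow L^{n/s}_{loc}$ give, after passing to a subsequence, $u_k\rightharpoonup u_\infty$ weakly in $\dot W^{s,n/s}(\R^n)$ with $u_k\to u_\infty$ a.e.\ and $u_\infty(x)\in\S^{N-1}$ a.e.

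Next, set $\Lambda:=\sup_k[u_k]_{W^{s,n/s}(\R^n)}$, fix the threshold $\eps=\eps(\Lambda)$ from Theorem~\ref{th:higherreg}, and define the concentration set
\[
S:=\bigl\{x_0\in\R^n \,:\, \limsup_{k\to\infty}[u_k]_{W^{s,n/s}(B_r(x_0))}>\tfrac{\eps}{2}\text{ for every }r>0\bigr\}.
\]
To show $S$ is finite, pick distinct $x_1,\ldots,x_M\in S$ and $r>0$ so small that the $B_r(x_i)$ are pairwise disjoint. A diagonal extraction yields a single index $k$ with $[u_k]_{W^{s,n/s}(B_r(x_i))}>\tfrac{\eps}{2}$ for each $i$. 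Since the product sets $B_r(x_i)\times B_r(x_i)$ are pairwise disjoint in $\R^n\times\R^n$, the double integrals defining the seminorms add and
\[
M\,(\tfrac{\eps}{2})^{n/s}<\sum_{i=1}^M[u_k]_{W^{s,n/s}(B_r(x_i))}^{n/s}\leq[u_k]_{W^{s,n/s}(\R^n)}^{n/s}\leq\Lambda^{n/s},
\]
so $M\leq(2\Lambda/\eps)^{n/s}$, and we may write $S=\{\alpha_1,\ldots,\alpha_l\}$.

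For any $x_0\notin S$ there exist $r>0$ and a further subsequence with $[u_k]_{W^{s,n/s}(B_{2r}(x_0))}\leq\eps$ for all large $k$. Theorem~\ref{th:higherreg} then furnishes $\delta>0$ and $C$, independent of $k$, with $\sup_k[u_k]_{W^{s+\delta,n/s}(B_r(x_0))}\leq C$. The compact embedding $W^{s+\delta,n/s}(B_r(x_0))\hookrightarrow W^{s,n/s}(B_r(x_0))$, combined with the a.e.\ identification of the limit from the first step, upgrades weak to strong convergence on $B_r(x_0)$. A diagonal argument over a countable collection of such balls covering $\R^n\setminus S$ delivers the desired single subsequence. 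The main technical point is ensuring the uniform applicability of Theorem~\ref{th:higherreg} along the sequence, i.e.\ that $\eps$, $\delta$ and the constant $C_{\Lambda,B}$ depend only on $(s,n,\Lambda)$ and on the geometry of the ambient ball; this is built into the statement of Theorem~\ref{th:higherreg}, and once it is secured the remainder is routine bookkeeping.
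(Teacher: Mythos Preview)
Your proposal is correct and follows essentially the same route as the paper: the paper's own proof cites a proposition of Da Lio for the weak limit and the finite concentration set, and then invokes Theorem~\ref{th:higherreg} together with the compact embedding $W^{s+\delta,n/s}\hookrightarrow W^{s,n/s}$ exactly as you do.

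One small technical point in your write-up of the concentration step: from $x_i\in S$ you only know that \emph{along the original sequence} $\limsup_k[u_k]_{W^{s,n/s}(B_r(x_i))}>\eps/2$, and these $\limsup$'s need not be realized along a common subsequence, so ``a diagonal extraction yields a single index $k$'' is not automatic. The standard fix (and what Da Lio's argument does) is to first pass to a subsequence along which the energy measures converge weak-$*$, or equivalently along which $[u_k]_{W^{s,n/s}(B)}$ converges for every ball with rational center and radius, and only then define $S$; after that your counting bound $M\le(2\Lambda/\eps)^{n/s}$ goes through verbatim.
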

A more precise analysis of compactness and the formation of bubbles will be part of a future work.

\section{Outline and Notation}
In Section~\ref{s:thepsest} we will prove the commutator estimate, Theorem~\ref{th:epsest}. Roughly speaking, we compute the kernel $\kappa_\eps(x,y,z)$ of the commutator and show that its derivative \emph{in $\eps$} (which gives a logarithmic potential) induces a bounded operator. The latter estimate is contained in Lemma~\ref{la:logest} which we shall prove via Littlewood-Paley theory in Section~\ref{s:la:logest}. 

Based on Theorem~\ref{th:epsest} we will then proceed in Section~\ref{s:th:est1} with the proof of Theorem~\ref{th:est1}. Finally, the consequences of this analysis, i.e. higher differentiability result for $p$-fractional harmonic maps is sketched in Section~\ref{s:th:higherreg}, and the proof of Theorem~\ref{th:aecompactness} in Session~\ref{s:th:aecompactness}. In the appendix we record a few necessary tools used throughout the proofs.

We try to keep the notation as simple as possible. For a ball $B$, $\lambda B$ denotes the concentric ball with $\lambda$-times the radius. With
\[
 (u)_B := |B|^{-1} \int_B u
\]
we denote the mean value.

The dual norm of the $p$-Laplacian is denoted as
\[
  \|\plaps{s}{p}{\Omega} u\|_{(W^{t,p}_0(\Omega))^\ast} \equiv \sup_{\varphi} |\plaps{s}{p}{\Omega} u [\varphi]|
\]
where the supremum is taken over $\varphi \in C_c^\infty(\Omega)$ with $[\varphi]_{W^{t,p}(\R^n)} \leq 1$.

We already defined the fractional Laplacian $\laps{s}$. Its inverse $\lapms{s}$ is the Riesz potential, which for some constant $c \in \R$ can be written as
\begin{equation}\label{eq:riespot}
 \lapms{s} g(x) = c\int_{\R^n} |x-z|^{s - n} g(z)\ dz.
\end{equation}

In the estimates, the constants can change from line to line. Whenever we deem the constant unimportant to the argument, we will drop it, writing $A \aleq B$ if $A \leq C\cdot B$ for some constant $C > 0$. Similarly we will use $A \aeq B$ whenever $A$ and $B$ are comparable.

\section{The commutator estimate: Proof of Theorem~\ref{th:epsest}}\label{s:thepsest}
\begin{proof}
Recall that for $t \in (0,n)$ there is a constant $c \in \R$ so that for any $\varphi \in C_c^\infty(\R^n)$,
\begin{equation}\label{eq:rieszpot}
 c \int_{\R^n} |x-z|^{t-n} \laps{t} \varphi(z)\ dz = \lapms{t} \laps{t} \varphi(x) = \varphi(x).
\end{equation}
We write 
\[\plaps{s+\eps}{p}{B}u[\varphi]
=\int\limits_B \int\limits_B \frac{|u(x)-u(y)|^{p-2}(u(x)-u(y))(\frac{\varphi(x)-\varphi(y)}{|x-y|^{\eps p}})}{|x-y|^{n+sp}} \ dx\ dy
\]
\[\overset{\eqref{eq:rieszpot}}{=}\int\limits_{\R^n} \int\limits_B \int\limits_B \frac{|u(x)-u(y)|^{p-2}(u(x)-u(y))(\frac{|x-z|^{t+\eps p-n}-|y-z|^{t+\eps p-n}}{|x-y|^{\eps p}})}{|x-y|^{n+sp}} \ dx\ dy\ \laps{t+\eps p} \varphi(z)dz\]
\[=\int\limits_{\R^n} \int\limits_B \int\limits_B \frac{|u(x)-u(y)|^{p-2}(u(x)-u(y))\brac{|x-z|^{t-n} - |x-y|^{t-n})}}{|x-y|^{n+sp}} \ dx\ dy\ \laps{t+\eps p} \varphi(z)dz\]
\[+\int\limits_{\R^n} \int\limits_B \int\limits_B \frac{|u(x)-u(y)|^{p-2}(u(x)-u(y)) \kappa_\eps(x,y,z) }{|x-y|^{n+sp}} \ dx\ dy\ \laps{t+\eps p} \varphi(z)dz\\
\]
with
\[
 \kappa_\eps(x,y,z) := \brac{ \frac{|x-z|^{t+\eps p-n}-|y-z|^{t+\eps p-n}}{|x-y|^{\eps p}}} -(|x-z|^{t-n} - |x-y|^{t-n}).
\]
Using again \eqref{eq:rieszpot} this reads as
\[R(u,\varphi) := \plaps{s+\eps}{p}{B}u[\varphi]-c\plaps{s}{p}{B}u[\laps{\eps p}\varphi]  \]
\[=\int\limits_{\R^n} \int\limits_B \int\limits_B \frac{|u(x)-u(y)|^{p-2}(u(x)-u(y)) \kappa_\eps(x,y,z) }{|x-y|^{n+sp}} \ dx\ dy\ \laps{t+\eps p} \varphi(z)dz.\\
\]
Since $\kappa_0(x,y,z) = 0$ for almost all $x,y,z \in \R^n$,
\[
 \kappa_\eps(x,y,z) = \int_0^\eps \frac{d}{d\delta}\kappa_\delta(x,y,z)\ d\delta.
\]
We thus set
\[
\begin{split}
 k_\delta(x,y,z) &:= |x-y|^{\delta p} \frac{d}{d\delta} \kappa_\delta(x,y,z) \\
 &=\brac{|x-z|^{t+\delta p-n}\log\frac{|x-z|}{|x-y|}-|y-z|^{t+\delta p-n}\log\frac{|y-z|}{|x-y|}}
 \end{split}
\]
and arrive at $R(u,\varphi)$ being equal to
\[
\int\limits_0^\eps \int\limits _B \int\limits _B \frac{|u(x)-u(y)|^{p-2}(u(x)-u(y))}{|x-y|^{(s+\eps)(p-1)}} \brac{\int\limits_{\R^n} \frac{\kappa_\delta(x,y,z)\ \laps{t+\eps p} \varphi(z)dz}{|x-y|^{s+\eps-(\eps-\delta) p}}} \ \frac{dx\ dy\ d\delta}{|x-y|^n}.\\
\]
With H\"older inequality we get the upper bound for $|R(u,\varphi)|$  
\[\eps [u]_{W^{s+\eps,p}(B)}^{p-1}\ \sup_{\delta \in (0,\eps)}  \brac{\int\limits_B \int\limits_B \brac{\int\limits_{\R^n} \frac{\kappa_\delta(x,y,z)\  \laps{t+\eps p} \varphi(z)dz}{|x-y|^{s+\eps -(\eps -\delta) p}}}^p \frac{dx\ dy}{|x-y|^n}}^{\frac{1}{p}} .\\
\]
This falls into the realm of Lemma~\ref{la:logest}, for
\[
 \alpha := t+\delta p, \quad \beta := t+\eps p, \quad \gamma := s+\eps -(\eps -\delta) p, \quad \gamma+\beta -\alpha = s+\eps.
\]
This concludes the proof.
\end{proof}

\section{Logarithmic potential estimate: Proof of Lemma~\ref{la:logest}}\label{s:la:logest}
For the proof of Lemma~\ref{la:logest} we will use the Littlewood-Paley decomposition: We refer to the Triebel monographs, e.g. \cite{Triebel1983} and \cite{GrafakosMF} for a complete picture of this tool. We will only need few properties:

For a tempered distribution $f$ we define $f_j$ to be the Littlewood-Paley projections $f_j := P_j f$, where
\[
 P_j f(x) := \int \limits_{\R^n} 2^{jn} p(2^{j} (x-z)) f(z)\ dz.
\]
Here, $p$ is a Schwartz function, and it can be chosen in a way such that
\begin{equation}\label{eq:sumfjf}
 \sum_{j \in \Z} f_j = f \quad \mbox{for all $f \in \Sw'$}.
\end{equation}
For any $j \in \Z$ we have the estimate for Riesz potentials and derivatives (cf. \eqref{eq:riespot})
\begin{equation}\label{eq:lapmslapstfj}
\|\lapms{s}|\laps{t} f_j| \|_{p} \aleq \sum_{i = j-1}^{j+1} 2^{j(t-s)} \|f_i \|_{p} 
\end{equation}
The homogeneous semi-norm for the Triebel space $\dot{F}^s_{p,p} = \dot{B}^s_{p,p}$ is
\begin{equation}\label{eq:triebelnorm}
 \|f\|_{\dot{F}^s_{p,p}} := \brac{\sum_{j \in \Z} 2^{jsp} \|f_j\|_{p}^p}^{\frac{1}{p}}.
\end{equation}
Crucially to us, the Triebel spaces are equivalent to Sobolev spaces: For $s \in (0,1)$ we have the identification
\begin{equation}\label{eq:trieblsob}
 \|f\|_{\dot{F}^s_{p,p}} \approx [f]_{W^{s,p}(\R^n)}.
\end{equation}

\begin{proof}[Proof of Lemma~\ref{la:logest}]
For $k \in \Z$, we use the annular cutoff function
\[
 \chi_{|y| \approx 2^{-k}} := \chi_{B_{2^{-k}(0)}\backslash B_{2^{-k-1}(0)}}(y).
\]
With this and \eqref{eq:sumfjf}, setting
\[
 T\varphi(x,y) := \int\limits_{\R^n} k(x,y,z)\ \laps{\beta}
\varphi(z)\ dz,
\]
we decompose
\[
  A(\varphi)^p
 \aleq \sum_{k\in \Z,j \in \Z} I_{j,k},
\]
where
\[
 I_{j,k} := \int\limits_{\R^n} \int\limits_{\R^n} \chi_{|x-y| \approx 2^{-k}}\left |T\varphi(x,y) \right |^{p-1}\ |T\varphi_j(x,y)| \ \frac{dx\ dy}{|x-y|^{n+\gamma p}}.
\]
Set
\[
 a_k := \brac{\int\limits_{\R^n} \int\limits_{\R^n} \chi_{|x-y| \approx 2^{-k}}\left |T\varphi(x,y)\right |^{p} \frac{dx\ dy}{|x-y|^{n+\gamma p}}}^{\frac{1}{p}}
\]
and
\[
 b_j := 2^{j(\gamma+\beta-\alpha)}\ \|\varphi_j\|_{p}.
\]
Note that with \eqref{eq:triebelnorm} and \eqref{eq:trieblsob}
\begin{equation}\label{eq:sumbjak}
 \brac{\sum_{k \in \Z} a_k^p}^{\frac{1}{p}} \aeq A(\varphi) \quad \mbox{and}\quad  \brac{\sum_{j \in \Z} b_j^p}^{\frac{1}{p}} \aeq \|\varphi\|_{\dot{F}^s_{p,p}} \aeq [\varphi]_{W^{s,p}(\R^n)}.
\end{equation}

Then with H\"older inequality,
\[
\begin{split}
 I_{j,k} 
\aleq\ & a_k^{p-1}\ \brac{\int\limits_{\R^n}\int\limits_{\R^n} \chi_{|x-y| \approx 2^{-k}}\left |T\varphi_j(x,y) \right |^{p}\ \ \frac{dx\ dy}{|x-y|^{n+\gamma p}}}^{\frac{1}{p}}\\
=: &a_k^{p-1}\ \tilde{I}_{j,k}.
 \end{split}
 \]
Now we have to possibilities of estimating $\tilde{I}_{j,k}$: 
 
\emph{Firstly}, for any small $\sigma \in (0,\alpha)$ we can employ the estimate $|\log \frac{|x-z|}{|x-y|}| \aleq \frac{|x-y|^\sigma}{|x-z|^\sigma} + \frac{|x-z|^\sigma}{|x-y|^\sigma}$, and have an estimate with Riesz potentials \eqref{eq:riespot}
\[
\begin{split}
 &\int\limits_{\R^n} |x-z|^{\alpha-n}\log\frac{|x-z|}{|x-y|} |\laps{\beta}\varphi_j(z)|\ dz\\
 \aleq &|x-y|^{-\sigma} \lapms{\alpha+\sigma} |\laps{\beta} \varphi_j|(x) + |x-y|^{\sigma} \lapms{\alpha-\sigma} |\laps{\beta} \varphi_j|(x).
\end{split}
 \]
Having in mind \eqref{eq:lapmslapstfj} we obtain the estimate
\[
 \begin{split}
\tilde{I}_{j,k} \aleq  &2^{k(\frac{n+\gamma p}{p})}\ 2^{k\sigma}  2^{-k\frac{n}{p}} \|\lapms{\alpha+\sigma} |\laps{\beta} \varphi_j|\|_{p} + 2^{k(\frac{n+\gamma p}{p})}\ 2^{-k\sigma}  2^{-k\frac{n}{p}} \|\lapms{\alpha-\sigma} |\laps{\beta} \varphi_j|\|_{p}\\
\aleq &2^{(k-j)(\gamma+\sigma)} (b_{j-1} + b_j + b_{j+1}) + 2^{(k-j)(\gamma-\sigma)} (b_{j-1} + b_j + b_{j+1}).
 \end{split}
\]
This is our first estimate:
\begin{equation}\label{eq:bruteest}
 \tilde{I}_{j,k} \aleq 2^{(k-j)(\gamma-\sigma)}\ (2^{2\sigma(k-j)}+ 1)\ (b_{j-1} + b_j + b_{j+1}).
\end{equation}
\emph{Secondly}, by a substitution we can write
\[
 T\varphi_j (x,y) = \int_{\R^n} |z|^{\alpha-n}\log\frac{|z|}{|x-y|} \brac{\laps{\beta}\varphi_j(z+x) -\laps{\beta}\varphi_j(z+y)} dz.
\]
We use now $|f(x) - f(y)| \aleq |x-y| (\mathcal{M}|\nabla f|(x)+\mathcal{M}|\nabla f|(y))$, where $\mathcal{M}$ is the Hardy-Littlewood maximal function. Then, again for any $\sigma > 0$,
\[
 \begin{split}
&|T\varphi_j(x,y)| \\
\aleq &|x-y| \int_{\R^n} |z|^{\alpha-n}\left |\log\frac{|z|}{|x-y|} \right|\ \mathcal{M}|\laps{\beta}\nabla \varphi_j|(z+x)\ dz \\
&+|x-y| \int_{\R^n} |z|^{\alpha-n} \left |\log\frac{|z|}{|x-y|} \right|\ |\mathcal{M}\laps{\beta}\nabla \varphi_j|(z+x)\ dz \\
\aleq &|x-y|^{1-\sigma} \lapms{\alpha +\sigma} \mathcal{M}|\laps{\beta}\nabla \varphi_j|(x)\\
&+ |x-y|^{1-\sigma} \lapms{\alpha +\sigma} \mathcal{M}|\laps{\beta}\nabla \varphi_j|(y)\\
&+ |x-y|^{1+\sigma} \lapms{\alpha -\sigma} \mathcal{M}|\laps{\beta}\nabla \varphi_j|(x)\\
&+ |x-y|^{1+\sigma} \lapms{\alpha -\sigma} \mathcal{M}|\laps{\beta}\nabla \varphi_j|(y)\\
 \end{split}
\]
Consequently, our second estimate is
\[
 \begin{split}
\tilde{I}_{j,k} \aleq  & 2^{k(\gamma -1+\sigma)}  \|\lapms{\alpha +\sigma} \mathcal{M}|\laps{\beta}\nabla \varphi_j\|_{p} + 2^{k(\gamma -1-\sigma)}  \|\lapms{\alpha -\sigma} \mathcal{M}|\laps{\beta}\nabla \varphi_j\|_{p} \\
\aleq & 2^{k(\gamma -1+\sigma)} \ 2^{j(-\alpha-\sigma+\beta+1)} \|\varphi_j\|_{p} + 2^{k(\gamma -1-\sigma)}\ 2^{j(-\alpha+\sigma+\beta+1)}  \|\varphi_j\|_{p}.
\end{split}
\]
Together with \eqref{eq:bruteest} we thus have
\[
 \tilde{I}_{k,j} \aleq \min \{2^{(k-j)(\gamma-\sigma)}\ (2^{2\sigma(k-j)}+ 1), 2^{(j-k)(1-\gamma -\sigma)}\ (1+ 2^{(j-k)(2\sigma)}) \}\ (b_{j-1}+ b_{j} + b_{j+1}) .
\]
In particular, since $\gamma \in (0,1)$ pick any $0 < \sigma < \min \{\gamma,1-\gamma\}$ -- which, as we shall see in a moment, makes the following sums convergent:
\[
\begin{split}
 A(\varphi)^p \aleq &\sum_{j\in \Z} \sum_{k = j+1}^\infty   2^{(j-k)(1-\gamma -\sigma)}\ (b_{j-1}+ b_{j} + b_{j+1})\ a_j^{p-1}\\
 &+\sum_{j\in \Z} \sum_{k = -\infty }^{j-1} 2^{(k-j)(\gamma-\sigma)}\  (b_{j-1}+ b_{j} + b_{j+1})\ a_j^{p-1}\\
 &+\sum_{j\in \Z} (b_{j-1}+ b_{j} + b_{j+1})\ a_j^{p-1}\\
=:& I + II + III.
\end{split}
\]
With H\"older inequality and \eqref{eq:sumbjak},
\[
 III \aleq (\sum_{j \in \Z} b_j^p)^{\frac{1}{p}}\ (\sum_{j \in \Z} a_j^p)^{\frac{p-1}{p}} = A(\varphi)^{p-1}\ [\varphi]_{W^{s,p}(\R^n)}.
\]

As for $I$, for any $\eps > 0$,
\[
\begin{split}
 I =& \sum_{j\in \Z} \sum_{k = j}^\infty 2^{(j-k)(1-\gamma-\sigma)}\ 
b_j\  a_k^{p-1}\\
\aleq & \sum_{j\in \Z} \sum_{k = j}^\infty 2^{(j-k)(1-\gamma-\sigma)}\ 
(\eps^p b_j^p\  + \eps^{-p'} a_k^{p})\\
=&  C_{1-\gamma-\sigma} \eps^p \sum_{j\in \Z} b_j^p\  + \eps^{-p'} \sum_{j\in \Z} \sum_{k = j}^\infty 2^{(j-k)(1-\gamma-\sigma)} a_k^{p}\\
=&  C_{1-\gamma-\sigma} \eps^p \sum_{j\in \Z} b_j^p\  + \eps^{-p'}\sum_{k \in \Z} \sum_{j=-\infty}^{k}  2^{(j-k)(1-\gamma-\sigma)} a_k^{p}\\
=&  C_{1-\gamma-\sigma} \eps^p \sum_{j\in \Z} b_j^p\  + \eps^{-p'}C_{1-\gamma-\sigma} \sum_{k \in \Z} a_k^{p}\\
\aeq &  \eps^p [\varphi]_{W^{s,p}(\R^n)}^p\  + \eps^{-p'}C_{1-\gamma-\sigma} A(\varphi)^p\\
\end{split}
 \]
The same works for $II$: 
\[
\begin{split}
 II =& \sum_{j\in \Z} \sum_{k = -\infty}^{j-1} 2^{(k-j)(\gamma-\sigma)}\ 
b_j\  a_k^{p-1}\\
\aleq &  \eps^p [\varphi]_{W^{s,p}(\R^n)}^p\  + \eps^{-p'}C_{1-\gamma-\sigma} A(\varphi)^p\\
\end{split}
 \]
Together,
\[
 I + II \aleq  \eps^p [\varphi]_{W^{s,p}(\R^n)}^p\  + \eps^{-p'}C_{1-\gamma-\sigma} A(\varphi)^p,
\]
which holds for any $\eps > 0$. Pick \[
          \eps := [\varphi]_{W^{s,p}(\R^n)}^{-\frac{1}{p' }}\ A(\varphi)^{\frac{1}{p' }}. 
        \]
Then
\[
 A(\varphi)^p \leq I+II+III \aleq  A(\varphi)^{p-1}\ [\varphi]_{W^{s,p}(\R^n)}.
\]
We conclude dividing both sides by $A(\varphi)^{p-1}$.
\end{proof}

\section{Higher Differentiability: Proof of Theorem~\ref{th:est1}}\label{s:th:est1}
In view of Lemma~\ref{la:extension} we can assume w.l.o.g. that $\Omega$ is a bounded open set, and that the support of $u$ is strictly contained in some open set $\Omega_1 \Subset \Omega$. Then Theorem~\ref{th:est1} follows from	
\begin{lemma}
Let $\Omega_1 \Subset \Omega$ two open, bounded sets, $s \in (0,1)$, $p \in [2,\infty)$. Then there exists an $\eps_0 > 0$ so that for any $\eps \in (0,\eps_0)$,
\[
 [u]_{W^{s+\eps,p}(\Omega)}^{p-1} \aleq 
 [u]_{W^{s,p}(\Omega)}^{p-1}  + \|\plaps{s}{p}{\Omega} u \|_{(W_0^{s-\eps (p-1),p}(\Omega))^\ast}.
\]
\end{lemma}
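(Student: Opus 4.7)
The plan is a duality argument built on the commutator estimate (Theorem~\ref{th:epsest}). By Lemma~\ref{la:extension} I may assume $\supp u \Subset \Omega_1 \Subset \Omega$ and work with $u \in W^{s+\eps,p}(\R^n)$ of compact support; in particular $u$ itself (mollified into $C_c^\infty$) is an admissible test function, giving
\[
 [u]_{W^{s+\eps,p}(\R^n)}^p = \plaps{s+\eps}{p}{\R^n}u[u].
\]
I would then apply Theorem~\ref{th:epsest} on $B = \R^n$ with $\varphi = u$ to obtain
\[
 [u]_{W^{s+\eps,p}(\R^n)}^p = c\,\plaps{s}{p}{\R^n}u[\laps{\eps p}u] + R(u,u), \qquad |R(u,u)| \leq C\eps\, [u]_{W^{s+\eps,p}(\R^n)}^p.
\]
The crucial factor $\eps$ on $R$ is precisely what allows absorption of $R(u,u)$ into the left-hand side once $\eps<\eps_0$ is small enough; this is the whole point of the commutator estimate.

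The next task is to pair $\plaps{s}{p}{\R^n}u[\laps{\eps p}u]$ against $\|\plaps{s}{p}{\Omega}u\|_{(W_0^{s-\eps(p-1),p}(\Omega))^\ast}$. The obstruction is that $\laps{\eps p}u$ is not compactly supported in $\Omega$, so I would fix $\Omega_1 \Subset \tilde\Omega \Subset \Omega$ and a cutoff $\eta\in C_c^\infty(\Omega)$ with $\eta\equiv 1$ on $\tilde\Omega$, and split
\[
 \plaps{s}{p}{\R^n}u[\laps{\eps p}u] = \plaps{s}{p}{\Omega}u[\eta\,\laps{\eps p}u]\ +\ \text{residuals}.
\]
The residuals come from (a) restricting the domain of integration from $\R^n\times\R^n$ down to $\Omega\times\Omega$, which, since $u$ vanishes outside $\Omega_1$, only involves off-diagonal kernel contributions with $|x-y|$ bounded below; and (b) replacing $\laps{\eps p}u$ by $\eta\,\laps{\eps p}u$, whose difference $(1-\eta)\laps{\eps p}u$ is supported at positive distance $d>0$ from $\supp u$, so $|x-y|^{-n-sp}$ is harmless there. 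In both cases H\"older and Poincar\'e-type bounds control the contribution by $[u]_{W^{s,p}(\Omega)}^{p-1}$ times a low-order norm of $u$. For the principal piece, $\eta\,\laps{\eps p}u \in W_0^{s-\eps(p-1),p}(\Omega)$, and a fractional Leibniz (Kato--Ponce) estimate gives $[\eta\,\laps{\eps p}u]_{W^{s-\eps(p-1),p}(\R^n)} \aleq [u]_{W^{s+\eps,p}(\R^n)}$, so that
\[
 |\plaps{s}{p}{\Omega}u[\eta\,\laps{\eps p}u]|\ \leq\ \|\plaps{s}{p}{\Omega}u\|_{(W_0^{s-\eps(p-1),p}(\Omega))^\ast}\, [u]_{W^{s+\eps,p}(\R^n)}.
\]

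Putting these ingredients together yields
\[
 [u]_{W^{s+\eps,p}(\R^n)}^p \aleq C\eps\,[u]_{W^{s+\eps,p}(\R^n)}^p + \|\plaps{s}{p}{\Omega}u\|_{(W_0^{s-\eps(p-1),p}(\Omega))^\ast}\, [u]_{W^{s+\eps,p}(\R^n)} + [u]_{W^{s,p}(\Omega)}^p.
\]
Choosing $\eps_0$ small to absorb the first term, then applying Young's inequality with exponents $p',p$ to the cross-term, and rearranging give
\[
 [u]_{W^{s+\eps,p}(\R^n)}^{p-1} \aleq \|\plaps{s}{p}{\Omega}u\|_{(W_0^{s-\eps(p-1),p}(\Omega))^\ast} + [u]_{W^{s,p}(\Omega)}^{p-1},
\]
from which the claim follows via $[u]_{W^{s+\eps,p}(\Omega)}\leq [u]_{W^{s+\eps,p}(\R^n)}$. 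The \emph{main obstacle} is the cutoff analysis controlling the residuals: the non-compactness of $\laps{\eps p}u$ must be shown to contribute only lower-order tails, which relies on the spatial separation provided by $\eta$ together with careful bookkeeping of the bilinear non-local integrand after truncating the outer domain to $\Omega$. A secondary technical point is justifying the use of $\varphi=u$ in Theorem~\ref{th:epsest}, which is handled by density of $C_c^\infty(\Omega)$ in the space of compactly supported $W^{s+\eps,p}$ functions.
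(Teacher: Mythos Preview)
Your approach is correct and genuinely different from the paper's. The paper does \emph{not} test with $u$ itself. Instead it covers $\Omega_1$ by finitely many balls $B_k$, and on each ball it invokes a Caccioppoli-type lemma (Lemma~\ref{la:lhs}) to bound $[u]_{W^{s+\eps,p}(2B_k)}^p$ by a small multiple of $[u]_{W^{s+\eps,p}(8B_k)}^p$ plus the supremum of $\plaps{s+\eps}{p}{8B_k}u[\varphi]$ over normalized $\varphi\in C_c^\infty(4B_k)$, plus lower order terms. It then applies the commutator estimate (Theorem~\ref{th:epsest}) to each such $\varphi$ (not to $u$) to convert $\plaps{s+\eps}{p}{8B_k}u[\varphi]$ into $\plaps{s}{p}{8B_k}u[\laps{\eps p}\varphi]$ at the cost of $C\eps\,\Gamma_{s+\eps}$; finally a cutoff of $\laps{\eps p}\varphi$ and an off-diagonal estimate change the domain from $8B_k$ to $\Omega$. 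Absorption happens twice: once for the $\delta$ coming from Lemma~\ref{la:lhs}, once for the $\eps$ from the commutator.

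Your route is more economical: the identity $[u]_{W^{s+\eps,p}(\R^n)}^p=\plaps{s+\eps}{p}{\R^n}u[u]$ and a single application of Theorem~\ref{th:epsest} with $\varphi=u$ replace both the covering and Lemma~\ref{la:lhs}; the cutoff analysis then targets $\laps{\eps p}u$ rather than $\laps{\eps p}\varphi$, but the mechanism (disjoint support, off-diagonal kernel decay, fractional Leibniz to control $\eta\,\laps{\eps p}u$ in $W^{s-\eps(p-1),p}$) is essentially the same as the paper's. What the paper's approach buys is modularity: by keeping $\varphi$ arbitrary it separates the localization of the energy (Lemma~\ref{la:lhs}) from the commutator step, which is convenient for the variants stated afterwards (Theorems~\ref{th:estv2} and~\ref{th:veryweak}) and keeps all test functions smooth so that Theorem~\ref{th:epsest} applies verbatim. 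Your approach requires the density step you mention to justify $\varphi=u$ in Theorem~\ref{th:epsest}; this is routine but should be stated.
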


\begin{proof}
We can find finitely many balls $(B_k)_{k =1}^K \subset \Omega$ so that $\bigcup_{k=1}^N B_k \supset \Omega_1$. We denote with $10B_k$ the concentric balls with ten times the radius, and may assume $\bigcup_{k=1}^N 10B_k \subset \Omega$.

Denote
\[
 \Gamma_{s} := [u]_{W^{s,p}(\Omega)}^{p} ,\quad  \Gamma_{s+\eps} := [u]_{W^{s+\eps,p}(\Omega)}^{p}.
\]
We then have
\[
 \Gamma_{s+\eps} \aleq \sum_{k=1}^K [u]_{W^{s+\eps,p}(2B_k)}^{p} + \sum_{k=1}^K \int_{\Omega \backslash 2B_k} \int_{B_k} \frac{|u(x)-u(y)|^p}{|x-y|^{n+(s+\eps)p}}\ dx\ dy.
\]
As for the second term, because of the disjoint support of the integrals we find
\[
 \int_{\Omega \backslash 2B_k} \int_{B_k} \frac{|u(x)-u(y)|^p}{|x-y|^{n+(s+\eps)p}}\ dx\ dy \aleq (\diam B_k)^{-\eps p}\ \Gamma_s.
\]
That is
\[
 \Gamma_{s+\eps} \aleq \sum_{k=1}^K [u]_{W^{s+\eps,p}(2B_k)}^{p} + \Gamma_s.
\]
With Lemma~\ref{la:lhs} and Poincar\'e inequality, Proposition~\ref{pr:poinc1}, for any $\delta > 0$,
\[
\begin{split}
 &\Gamma_{s+\eps} \aleq \delta^p \Gamma_{s+\eps} + C_\delta \Gamma_s + \sum_{k=1}^K \delta^{-p'} \brac{\sup_{\varphi} \plaps{s+\eps}{p}{8B_k}u[\varphi]}^{\frac{p}{p-1}}\\
\end{split}
\]
where the supremum is over all $\varphi \in C_c^\infty(4B_k)$ and $[\varphi]_{W^{s+\eps,p}(\R^n)} \leq 1$. Here we also used that $\bigcup_{k=1}^K 8B_k$ covers no more than $\Omega$. Choosing $\delta$ sufficiently small, we can estimate $\Gamma_{s+\eps}$ by
\[
  \Gamma_s 
+ \sum_{k=1}^K \brac{\sup \left \{ |\plaps{s+\eps}{p}{8B_k} u[\varphi]|: \ \varphi \in C_c^\infty(4B_k), [\varphi]_{W^{s+\eps,p}(\R^n)} \leq 1  \right \} }^{\frac{p}{p-1}}.
\]
With Theorem~\ref{th:epsest} this can be estimated by
\[
 \Gamma_s + \eps^{\frac{p}{p-1}} \Gamma_{s+\eps}
\]
\[ 
+ \sum_{k=1}^K \brac{\sup \left \{ |\plaps{s}{p}{8B_k} u[\laps{\eps p}\varphi]|: \ \varphi \in C_c^\infty(4B_k), [\varphi]_{W^{s+\eps,p}(\R^n)} \leq 1  \right \} }^{\frac{p}{p-1}}.
\]
If $\eps \in [0,\eps_0)$ for $\eps_0$ small enough, we can again absorb $\Gamma_{s+\eps}$. The estimate for $\Gamma_{s+\eps}$ becomes
\[
 \Gamma_s 
+ \sum_{k=1}^K \brac{\sup \left \{ |\plaps{s}{p}{8B_k} u[\laps{\eps p}\varphi]|: \ \varphi \in C_c^\infty(4B_k), [\varphi]_{W^{s+\eps,p}(\R^n)} \leq 1  \right \} }^{\frac{p}{p-1}}.
\]
Next, we need to transform $\laps{\eps p}\varphi$ into a feasible testfunction, and denoting the usual cutoff function with $\eta_{6B_k} \in C_c^\infty(6B_{k})$, $\eta_{6B_k} \equiv 1$ in $5B_{k}$
\[
 \laps{\eps p}\varphi =: \psi + (1-\eta_{6B_k})\laps{\eps p}\varphi
\]
Then $\psi \in C_c^\infty(6B_{k})$ 
\[
 [\psi]_{W^{s-\eps (p-1),p}(\Omega)} \aleq C_k [\varphi]_{W^{s+\eps,p}(\R^n)} \leq C_k.
\]
Moreover, the disjoint support of $(1-\eta_{6B_k})$ and $\varphi$ implies (see, e.g., \cite[Lemma A.1]{BPSknot12})
\[
 [(1-\eta_{6B_k})\laps{\eps p}\varphi]_{\lip} \leq C_k\ [\varphi]_{W^{s+\eps,p}(\R^n)}.
\]
Consequently, 
\[
 |\plaps{s}{p}{8B_k} u[\laps{\eps p}\varphi-\psi]| \aleq [u]_{W^{s,p}(\Omega)}^{p-1}.
\]
Hence, our estimate for $\Gamma_{s+\eps}$ now looks like
\[
 \Gamma_s 
+ \sum_{k=1}^K \brac{\sup \left \{ |\plaps{s}{p}{8B_k} u[\psi]|: \ \psi \in C_c^\infty(6B_k), [\psi]_{W^{s-\eps (p-1),p}(\R^n)} \leq 1  \right \} }^{\frac{p}{p-1}}.
\]
Finally, we need to transform the support of $\laps{s}_p$ from $8B_k$ to $\Omega$. Since $\supp \psi \subset 6B_k$, the disjoint support of the integrals gives
\[
\begin{split}
 &|\plaps{s}{p}{8B_k} u[\psi] - \plaps{s}{p}{\Omega} u[\psi]|\\
 \aleq & \int_{\Omega \backslash 8B_k} \int_{7B_k} \frac{|u(x)-u(y)|^{p-1}\ |\psi(x)-\psi(y)|}{|x-y|^{n+sp}}\ dx\ dy\\
 \leq &C_k [u]_{W^{s,p}(\Omega)}^{p-1} [\psi]_{W^{s-\eps(p-1),p}(\R^n)}.
\end{split}
 \]
This implies the final estimate of $\Gamma_{s+\eps}$ by
\[
 \Gamma_s 
+ \brac{\sup \left \{ |\plaps{s}{p}{\Omega} u[\psi]|: \ \psi \in C_c^\infty(\Omega), [\psi]_{W^{s-\eps (p-1),p}(\R^n)} \leq 1  \right \} }^{\frac{p}{p-1}}.
\]
\end{proof}

\section{Differentiability of ${p}$-harmonic maps: Proof of Theorem~\ref{th:higherreg}}\label{s:th:higherreg}
For $B \subset \R^n$, $t \in (0,1)$, we set
\[
 T_{t,B}u(z) = \int_{B} \int_B \frac{|u(x)-u(y)|^{\frac{n}{s}-2}(u(x)-u(y))\ (|x-z|^{t-n}-|y-z|^{t-n})}{|x-y|^{n+s\frac{n}{s}}}\ dx\ dy.
\]
$T_{t,B}u$ was introduced in \cite{SchikorraCPDE14} because of the following relation
\begin{equation}\label{eq:Ttbmot}
\begin{split}
 &c\int_{\R^n} T_{t,B}u(z)\ \varphi(z)\ dz\\
 = &\int_{B}\int_{B} \frac{|u(x)-u(y)|^{\frac{n}{s}-2}(u(x)-u(y))\ (\lapms{t}\varphi(x)-\lapms{t}\varphi(y))}{|x-y|^{n+s\frac{n}{s}}}\ dx\ dy.
\end{split}
\end{equation}

From \cite[in particular (3.1), Lemma 3.3, 3.4, 3.5]{SchikorraCPDE14} we have the following
\begin{theorem}\label{th:Ttest}
Let $u$ satisfy \eqref{eq:uwsns} and \eqref{eq:criticalpt} in an open set $\Omega$. Assume that on the Ball $2B$ for a small enough $\eps > 0$ (depending on $\Lambda$) \eqref{eq:smalluloc} holds. Then there is $t_0 < s$, $\sigma > 0$, so that for some $\gamma_2 > \gamma_1 \gg 1$ for any ball $B_{\gamma_2 \rho} \subset B$
\begin{equation}\label{eq:usobnormmorrey}
 [u]_{W^{s,\frac{n}{s}}(B_{\rho})} \aleq C_{\Lambda} \rho^\sigma,
\end{equation}
and
\begin{equation}\label{eq:estTbgammabrho}
\| T_{t_0,B_{\gamma_1 \rho}} u\|_{\frac{n}{n-t_0},B_\rho} \leq  C_{\Lambda} \rho^\sigma.
\end{equation}
\end{theorem}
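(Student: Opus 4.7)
The plan is to follow the $\eps$-regularity program of \cite{SchikorraCPDE14}, producing the two Morrey-type decay estimates simultaneously via a hole-filling argument whose main input is a duality estimate for $T_{t_0,B}u$ obtained from the Euler-Lagrange equation and the smallness \eqref{eq:smalluloc}.

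\textbf{Step 1: Euler-Lagrange equation and the sphere constraint.} Differentiating \eqref{eq:criticalpt} in $t$ at $t=0$ and using $|u|^2\equiv 1$, I first derive a distributional equation of the form $\plaps{s}{n/s}{\Omega} u = \lambda\, u$, where $\lambda$ is a Lagrange multiplier enforcing the sphere constraint. Because $u\wedge u = 0$, testing with vector fields of the form $u\wedge \psi$ annihilates the normal part and leaves a tangential equation without $\lambda$ — this is the algebraic cancellation that carries the whole scheme.

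\textbf{Step 2: Riesz-potential testfunction and the operator $T_{t_0,B}$.} To convert the equation into control of $T_{t_0,B}u$, I test the tangential equation with $\psi := u\wedge \lapms{t_0}\vi$, where $\vi \in C_c^\infty(B_\rho)$ and $t_0<s$ is chosen close to $s$. By the identity \eqref{eq:Ttbmot}, the leading term of $\plaps{s}{n/s}{B_{\gamma_1\rho}}u[\psi]$ is exactly $c\int T_{t_0,B_{\gamma_1\rho}}u \cdot (u\wedge \vi)$, and by duality an $L^{n/(n-t_0)}$ bound on $T_{t_0,B_{\gamma_1\rho}}u$ reduces to estimating the remaining commutator-type error terms against $\vi$ with $[\vi]_{W^{t_0,n/t_0}} \le 1$.

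\textbf{Step 3: Commutator control via smallness.} The error terms are nonlinear fractional commutators between $u$ and $\lapms{t_0}\vi$ of the three-term type familiar from the fractional harmonic map literature; they are estimated by fractional Leibniz/product inequalities in Triebel-Lizorkin spaces and by localizing the nonlocal tails to $B_{\gamma_2\rho}$ plus an outer tail controlled by $[u]_{W^{s,n/s}(\Omega)}\le \Lambda$. Crucially each commutator term carries a factor $[u]_{W^{s,n/s}(2B)}^{n/s-1}$, which by \eqref{eq:smalluloc} is $\le \eps^{n/s-1}$ and can be absorbed into the left-hand side. This yields, on every ball $B_{\gamma_2\rho}\subset B$,
\[
\|T_{t_0,B_{\gamma_1\rho}}u\|_{\frac{n}{n-t_0},B_\rho} \;\aleq\; [u]_{W^{s,n/s}(B_{\gamma_2\rho})} + (\text{outer tail})\cdot \rho^{\sigma'}.
\]

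\textbf{Step 4: Hole-filling and Dirichlet growth.} Combining the previous bound with the Caccioppoli/Poincar\'e-type inverse estimate of \cite{SchikorraCPDE14} (which controls $[u]_{W^{s,n/s}(B_\rho)}$ by $T_{t_0,B_{\gamma_1\rho}}u$ plus a hole-filling defect) one obtains an inequality of the form
\[
 [u]_{W^{s,n/s}(B_\rho)}^{n/s} \;\leq\; \theta\, [u]_{W^{s,n/s}(B_{\gamma_2\rho})}^{n/s} + C_\Lambda \rho^{\sigma_0}
\]
with $\theta<1$, uniformly in $\rho$. Iterating this on dyadic scales via the standard Morrey-Campanato lemma produces the decay $[u]_{W^{s,n/s}(B_\rho)}\aleq C_\Lambda \rho^\sigma$ for some $\sigma>0$, and plugging this back into the Step-3 estimate delivers the matching decay of $\|T_{t_0,B_{\gamma_1\rho}}u\|_{n/(n-t_0),B_\rho}$.

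The main obstacle is Step 3: isolating the exact algebraic structure that turns $\plaps{s}{n/s}{\Omega}u[u\wedge \lapms{t_0}\vi]$ into a leading $T_{t_0,B}u$-duality pairing plus commutators that all absorb the small factor $[u]_{W^{s,n/s}(2B)}^{n/s-1}$. This is the nonlinear analog of the three-commutator estimates for fractional harmonic maps and requires careful bookkeeping of fractional Leibniz errors, which is exactly what the cited lemmas of \cite{SchikorraCPDE14} provide.
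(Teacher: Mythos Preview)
The paper does not give a proof of this theorem at all: it is stated as a direct consequence of results already established in \cite{SchikorraCPDE14}, with the specific pointers ``(3.1), Lemma~3.3, 3.4, 3.5''. In other words, in this paper Theorem~\ref{th:Ttest} is a black-box import, not something reproved here.

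Your proposal is therefore not comparable to a proof in the present paper; what you have written is a reasonable high-level summary of the $\eps$-regularity machinery of \cite{SchikorraCPDE14} (Euler--Lagrange equation plus sphere cancellation, testing with $u\wedge \lapms{t_0}\vi$ to produce the $T_{t_0,B}$-duality pairing, commutator/three-term estimates carrying the small factor $[u]_{W^{s,n/s}(2B)}$, and a hole-filling iteration). If your goal is to reproduce the cited argument, the outline is along the right lines, but it remains a sketch: the precise commutator identities and the localization of the nonlocal tails (which here are nontrivial because the domain of $T_{t_0,\cdot}$ changes with $\rho$) are exactly the content of the cited lemmas and cannot be waved through. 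For the purposes of \emph{this} paper, though, no proof is expected---only the citation.
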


Estimate \eqref{eq:estTbgammabrho} looks almost as if $T_{t_0,B_{\gamma_1\rho}}$ belongs locally to a Morrey space. But the domain dependence on $B_{\gamma_1 \rho}$ bars us from exploiting this. The following proposition removes the domain dependence.
\begin{proposition}\label{pr:Tt0morrey}
Under the assumptions of Theorem~\ref{th:Ttest} there exists $\gamma > 1$, $\sigma > 0$ so that
\[
\| T_{t_0,B} u\|_{\frac{n}{n-t_0},B_\rho} \leq  C_{B,\Lambda} \rho^\sigma
\]
for any ball so that $B_{\gamma \rho} \subset B$. 
\end{proposition}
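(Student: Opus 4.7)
The plan is to write $T_{t_0,B}u = T_{t_0,B_{\gamma_1\rho}}u + E$, where the tail
\[
E(z) := \int\!\!\int_{(B \times B)\setminus (B_{\gamma_1\rho})^2} \frac{|u(x)-u(y)|^{p-2}(u(x)-u(y))\brac{|x-z|^{t_0-n}-|y-z|^{t_0-n}}}{|x-y|^{2n}}\,dx\,dy
\]
(with $p = n/s$) collects precisely the contributions missing from Theorem~\ref{th:Ttest}. The goal is to show $\|E\|_{\frac{n}{n-t_0},B_\rho} \le C_{B,\Lambda}\, \rho^{\sigma'}$ for some $\sigma' > 0$; combined with \eqref{eq:estTbgammabrho} this yields the proposition (possibly with a smaller exponent and a larger $\gamma$).

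To estimate $E$, I would decompose $B$ dyadically around the common center $z_0$ of $B_\rho$ and $B_{\gamma_1\rho}$: set $A_0 := B_{\gamma_1\rho}(z_0)$ and $A_k := B_{2^k\gamma_1\rho}(z_0) \setminus B_{2^{k-1}\gamma_1\rho}(z_0)$ for $1 \le k \le K$, where $2^K\gamma_1\rho \approx \diam B$. The tail splits into contributions over $A_i \times A_j$ with $\max(i,j) \ge 1$, and on each such piece I would apply H\"older's inequality to separate $|u(x)-u(y)|^{p-1}$ from the kernel. Since $A_i \cup A_j \subset B_{2^{\max(i,j)+1}\gamma_1\rho}$, the $u$-factor is controlled via the Morrey-type estimate \eqref{eq:usobnormmorrey} by $(2^{\max(i,j)}\gamma_1\rho)^{\sigma(p-1)}$. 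The kernel factor I would treat by the mean-value bound
\[
\left||x-z|^{t_0-n}-|y-z|^{t_0-n}\right| \aleq |x-y|\, \min(|x-z|,|y-z|)^{t_0-n-1}
\]
for $x, y$ in comparable annuli, and by the crude pointwise bound $|x-z|^{t_0-n} + |y-z|^{t_0-n}$ otherwise. The resulting double geometric series in $i, j$ converges thanks to $t_0 < s$ providing a decay factor $(2^k\gamma_1\rho)^{t_0-n}$, and yields the desired $\rho^{\sigma'}$-decay after taking the $L^{\frac{n}{n-t_0}}$-norm in $z \in B_\rho$, with $\sigma' = \sigma(p-1)$ (or a suitable variant) and constants depending on $\diam B, \Lambda, \gamma_1$.

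The main obstacle is the cross-annular piece $A_0 \times A_j$ (and its symmetric counterpart), where $x \in A_0$ can lie close to $z \in B_\rho \subset A_0$, so $|x-z|^{t_0-n}$ is singular and dominates the kernel; since $|y-z|^{t_0-n}$ is then small there is no help from the difference structure. A pointwise-in-$z$ estimate fails because $|x-z|^{(t_0-n)p}$ is not integrable in $z$ for $p = n/s$ in dimensions $n \ge 2$. The clean way out is to dualize: test $E$ against $\varphi \in L^{n/t_0}(B_\rho)$, use \eqref{eq:Ttbmot} to rewrite the pairing with kernel $I^{t_0}\varphi(x) - I^{t_0}\varphi(y)$, and exploit the pointwise decay $|I^{t_0}\varphi(w)| \aleq \rho^{n-t_0}|w-z_0|^{t_0-n}$ and $|\nabla I^{t_0}\varphi(w)| \aleq \rho^{n-t_0}|w-z_0|^{t_0-n-1}$ for $|w-z_0| \gg \rho$, combined with standard Riesz-potential bounds for $I^{t_0}\varphi$ on $A_0$. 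In this way the singularity of $|x-z|^{t_0-n}$ is replaced by the tame boundedness of $I^{t_0}\varphi$ on $B_{\gamma_1\rho}$, and the same annular summation closes the estimate.
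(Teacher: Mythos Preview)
Your strategy is workable and leads to the claimed estimate, but it is organized differently from the paper's proof, and one of your final statements needs a correction.

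\textbf{Comparison with the paper.} The paper dualizes from the outset: for $\varphi \in C_c^\infty(B_{\rho^{\kappa_1}})$ with $\|\varphi\|_{n/t_0}\le 1$ it rewrites $\int T_{t_0,B}u\,\varphi$ via \eqref{eq:Ttbmot} and then inserts two successive cutoffs at scales $\rho^{\kappa_2}$ and $\rho^{\kappa_3}$ (with adjustable parameters $\kappa_1\ge\kappa_2\ge\kappa_3\ge 1$). This produces exactly four terms rather than a dyadic sum; the main term is $\int T_{t_0,B_{\gamma_1\rho}}u\,\phi$ for a relocalized test function $\phi$, to which \eqref{eq:estTbgammabrho} applies directly, and the remaining three are disposed of by disjoint-support estimates. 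A two-pass choice of the $\kappa_i$ (first all equal to $1$, then $\kappa_1>\kappa_2>\kappa_3=1$) gives first a qualitative bound on a fixed fraction of $B$ and then the $\rho^\sigma$-decay. Your route---split off the tail $E$ and sum over dyadic annuli---is more explicit and avoids the $\kappa$-parameters, at the price of having to control a double geometric series and to mix pointwise and dual estimates. Both arguments rest on the same three inputs: \eqref{eq:estTbgammabrho}, the Morrey bound \eqref{eq:usobnormmorrey}, and the far-field decay of $I^{t_0}\varphi$.

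\textbf{A point that needs fixing.} Your last sentence asserts the ``tame boundedness of $I^{t_0}\varphi$ on $B_{\gamma_1\rho}$''. This is false as stated: $n/t_0$ is the critical exponent, so $I^{t_0}\varphi$ lands only in $BMO$ and is in general unbounded near $\supp\varphi\subset B_\rho$. What the cross-annular piece $A_0\times A_j$ actually requires, after dualizing and applying H\"older, is an $L^{n/s}$-type bound on $I^{t_0}\varphi$ over $A_0$, and this is available: either via Hardy--Littlewood--Sobolev (using $\supp\varphi\subset B_\rho$ to pass from $L^{n/t_0}$ to $L^{n/(s+t_0)}$), or via John--Nirenberg combined with the pointwise decay you already noted on $A_0\setminus B_{2\rho}$; either way one obtains $\|I^{t_0}\varphi\|_{L^{n/s}(A_0)}\aleq \rho^{s}$. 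The paper encodes the same information as $[\psi]_{W^{t_0,n/t_0}}\aleq \|\varphi\|_{n/t_0}$ for the cutoff $\psi=\eta\, I^{t_0}\varphi$. With this adjustment your argument closes.
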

\begin{proof}
Set $\kappa_1 \geq \kappa_2 \geq \kappa_3 \geq 1$ to be chosen later. Take $\gamma := 2\gamma_1$ with $\gamma_1$ from \eqref{eq:estTbgammabrho}. We will always assume $\rho < 1$.

For some $\varphi \in C_c^\infty(B_{\rho^{\kappa_1}})$, $\|\varphi\|_{\frac{n}{t_0}} \leq 1$ we have
\[
\begin{split}
 &\| T_{t_0,B} u\|_{\frac{n}{n-t_0},B_{\rho^{\kappa_1}}}\\
 \aleq & \int_{\R^n} T_{t_0,B} u\ \varphi\\
 \overset{\eqref{eq:Ttbmot}}{\aeq} & \int_{B}\int_{B} \frac{|u(x)-u(y)|^{\frac{n}{s}-2}(u(x)-u(y))\ (\lapms{t_0}\varphi(x)-\lapms{t_0}\varphi(y))}{|x-y|^{n+s\frac{n}{s}}}\ dx\ dy.\\
\end{split}
 \]
We will now use several cutoffs to slice $\varphi$ into the right form. This kind of arguments and the consequent (tedious) estimates have been used several times in work related to fractional harmonic maps, cf. e.g. \cite{DR1dSphere,DR1dMan,DndMan,BPSknot12,SchikorraCPDE14,Seps,SNHarmS10}, and we will not repeat them in detail. We will also assume that $\kappa_1 > \kappa_2 > \kappa_3$. If they are equal, to keep the ``disjoint support estimates'' working one needs to use cutoff functions on twice, four times etc. of the Balls. 

For a cutoff function $\eta_{B_{\rho^{\kappa_2}}} \in C_c^\infty(B_{2\rho^{\kappa_2}})$, $\eta_{B_{\rho^{\kappa_2}}} \equiv 1$ on $B_{\rho^{\kappa_2}}$, we have
\[
 \lapms{t_0}\varphi := \psi + (1-\eta_{B_{\rho^{\kappa_2}}}) \lapms{t_0} \varphi.
\]
Note that $\psi \in C_c^\infty(B_{2\rho^{\kappa_2}})$ and\footnote{This is true if $\frac{n}{t_0} \geq 2$, since then $[f]_{W^{t_0,\frac{n}{t_0}}} \leq \|\laps{t_0} f \|_{\frac{n}{t_0}}$. If $\frac{n}{t_0} < 2$ one has to adapt the estimate, but the results remains true.}
\begin{equation}\label{eq:psiest}
 \|\laps{t_0} \psi\|_{\frac{n}{t_0}} + [\psi]_{W^{t_0,\frac{n}{t_0}}(\R^n)} \aleq \|\varphi \|_{\frac{n}{t_0}}.
\end{equation}
The disjoint support of $(1-\eta)$ and $\varphi$ ensures (see \cite[Lemma A.1]{BPSknot12}) 
\begin{equation}\label{eq:varphimpsi}
 [\lapms{t_0} \varphi - \psi]_{W^{s,\frac{n}{s}}(\R^n)} \aleq \rho^{(\kappa_1-\kappa_2)(n-t_0)}\ \|\varphi\|_{\frac{n}{t_0}}.
\end{equation}
We furthermore decompose
\[
 \laps{t_0} \psi =: \phi + (1-\eta_{B_{\rho^{\kappa_3}}}) \laps{t_0} \psi.
\]
Then $\phi \in C_c^\infty(B_{2\rho^{\kappa_3}})$ and 
\begin{equation}\label{eq:phiest}
 \|\phi\|_{\frac{n}{t_0}} \aleq \|\varphi\|_{\frac{n}{t_0}},
\end{equation}
\begin{equation}\label{eq:psimphi}
 \|\nabla (\psi-\lapms{t_0}\phi)\|_{\infty} \aleq \rho^{-\kappa_3+(\kappa_2-\kappa_3)n}\ \|\varphi\|_{\frac{n}{t_0}}.
\end{equation}

Again with \eqref{eq:Ttbmot}, we then have
\[
\| T_{t_0,B} u\|_{\frac{n}{n-t_0},B_\rho} \aleq |I| + |II| + |III| + |IV|
\]
where
\[
 I := \int T_{t_0,B_{\gamma \rho}}u\ \phi
\]
\[
 II := \int_{B_{\gamma \rho}}\int_{B_{\gamma \rho}} \frac{|u(x)-u(y)|^{\frac{n}{s}-2}(u(x)-u(y))\ ((\psi-\lapms{t_0}\phi)(x) - (\psi-\lapms{t_0}\phi)(y))}{|x-y|^{n+s\frac{n}{s}}}\ dx\ dy
\]
\[
 III := \int_{B \backslash B_{\gamma \rho}}\int_{B_{2\rho^{\kappa_2}}} \frac{|u(x)-u(y)|^{\frac{n}{s}-2}(u(x)-u(y))\ (\psi(x) - \psi(y))}{|x-y|^{n+s\frac{n}{s}}}\ dx\ dy
\]
and
\[
 IV := \int_{B }\int_{B } \frac{|u(x)-u(y)|^{\frac{n}{s}-2}(u(x)-u(y))\ ((\lapms{t_0} \varphi-\psi)(x) - (\lapms{t_0} \varphi-\psi)(y))}{|x-y|^{n+s\frac{n}{s}}}\ dx\ dy
\]

With \eqref{eq:phiest}, $\supp \phi \subset B_{2\rho^{\kappa_3}} \subset B_{2\rho}$, and \eqref{eq:estTbgammabrho},
\[
|I| \aleq \rho^{\sigma}.
\]
With \eqref{eq:usobnormmorrey}, \eqref{eq:psimphi} (for $\rho$ small enough),
\[
 |II| \aleq [u]_{W^{s,\frac{n}{s}}(B_{\gamma\rho})}^{\frac{n}{s}-1} [\psi-\lapms{t_0}\phi]_{W^{s,\frac{n}{s}}(B_{\gamma \rho})} \aleq \rho^{\sigma(\frac{n}{s}-1)}\ \rho^{-(\kappa_3-1)} \rho^{(\kappa_2-\kappa_3)n}.
\]
With the disjoint support of the integrals, H\"older inequality ($\frac{n}{t_0} > \frac{n}{s}$), and \eqref{eq:psiest},
\[
 |III| \aleq [u]_{W^{s,\frac{n}{s}}(B)}^{p-1}\ \rho^{t_0-s}\ \rho^{\kappa_2(s-t_0)}\ [\psi]_{W^{t_0,\frac{n}{t_0}}(B)} \aleq \rho^{(\kappa_2-1)(s-t_0)}.
\]
Lastly, with \eqref{eq:varphimpsi}
\[
|IV| \aleq [u]_{W^{s,\frac{n}{s}}(B)}^{\frac{n}{s}-1} [\lapms{t_0}\varphi-\psi]_{W^{s,\frac{n}{s}}(B)} \aleq \rho^{(\kappa_1-\kappa_2)(n-t_0)}.
\]
If we choose $ \kappa_1 = \kappa_2 = \kappa_3 = 1$, we obtain
\[
 \| T_{t_0,B} u\|_{\frac{n}{n-t_0},B_{\rho}} \aleq 1,
\]
whenever $B_{2\gamma \rho} \subset B$, In particular
\begin{equation}\label{eq:TtlambdaBest}
 \| T_{t_0,B} u\|_{\frac{n}{n-t_0},\frac{1}{2\gamma}B} \aleq 1.
\end{equation}
On the other hand, we may take
\[
 \kappa_1 > \kappa_2 > \kappa_3 = 1.
\]
Then we have shown that
\[
 \| T_{t_0,B} u\|_{\frac{n}{n-t_0},B_{\rho^{\kappa_1}}} \aleq \rho^{\tilde{\sigma}},
\]
which holds whenever  $B_{\gamma \rho} \subset B$.
Equivalently, for an even smaller $\tilde{\sigma}$,
\[
 \| T_{t_0,B} u\|_{\frac{n}{n-t_0},B_{\rho}} \aleq \rho^{\tilde{\sigma}},
\]
which holds whenever $B_{\gamma \rho^{\frac{1}{\kappa_1}}} \subset B$. With \eqref{eq:TtlambdaBest} this estimate also holds whenever $B_{2\gamma \rho} \subset B$, with a constant depending on the radius of $B$.
\end{proof}

In \cite{SchikorraCPDE14} it is shown that for $t_1 > t_0$, $T_{t_1,B} u = \lapms{t_1-t_0} T_{t_0,B} u$. Since according to Proposition~\ref{pr:Tt0morrey} $T_{t_0,B} u$ belongs to a Morrey space, we can apply Adams estimates on Riesz potential acting on Morrey spaces \cite[Theorem 3.1 and Corollary after Proposition 3.4]{Adams75} and obtain an increased integrability estimate for $T_{t_1,B} u$.
\begin{proposition}
Under the assumptions of Theorem~\ref{th:Ttest} there are $\gamma > 1$, $t_0 < t_1 < s$, and $p_1 > \frac{n}{n-t_1}$ so that
\[
\| T_{t_1,B} u\|_{p_1,B_\rho} \leq  C_{\Lambda} \rho^\sigma
\]
for any ball so that $B_{\gamma \rho} \subset B$.
\end{proposition}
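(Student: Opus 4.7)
The plan is to combine two ingredients: the factorization
\[
T_{t_1,B}u \;=\; \lapms{t_1-t_0}\, T_{t_0,B} u
\]
recorded in \cite{SchikorraCPDE14}, and Adams' mapping theorem for the Riesz potential on Morrey spaces. Proposition~\ref{pr:Tt0morrey} places $T_{t_0,B}u$ precisely in a Morrey space: the bound
\[
\brac{\int_{B_\rho}|T_{t_0,B}u|^{\frac{n}{n-t_0}}}^{\frac{n-t_0}{n}} \aleq \rho^{\sigma},
\]
valid whenever $B_{\gamma\rho}\subset B$, is equivalent to a Morrey estimate of exponent $\lambda := \sigma \frac{n}{n-t_0}$ at integrability $\frac{n}{n-t_0}$, once one restricts to balls inside a fixed relatively compact subset of $B$.

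Concretely, I would proceed in three steps. First, localize: fix a slightly shrunken ball $B' \Subset B$ so that every ball $B_\rho(x_0)$ with $x_0 \in B'$ and $\rho$ small automatically satisfies $B_{\gamma\rho}(x_0)\subset B$. Multiplying $T_{t_0,B}u$ by a smooth cutoff supported in $B'$ produces a function that sits in the global Morrey space $\mathcal{L}^{n/(n-t_0),\,\lambda}(\R^n)$, at the cost of a $B$-dependent constant. Second, pick $t_1\in(t_0,s)$ with $\alpha:=t_1-t_0$ small enough to fall in the range where Adams' theorem \cite[Theorem 3.1 and the Corollary after Proposition 3.4]{Adams75} applies, i.e.\ so that $\lapms{\alpha}$ is bounded from $\mathcal{L}^{n/(n-t_0),\,\lambda}$ into $\mathcal{L}^{p_1,\,\lambda}$ with $p_1$ given by Adams' formula $\frac{1}{p_1} = \frac{n-t_0}{n}-\frac{\alpha}{\lambda}$. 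The resulting Morrey estimate reads
\[
\brac{\int_{B_\rho}|\lapms{\alpha}T_{t_0,B}u|^{p_1}}^{\frac{1}{p_1}} \aleq \rho^{\lambda/p_1},
\]
which by the factorization is exactly the desired estimate for $T_{t_1,B}u$, with a new but still positive H\"older-type exponent $\tilde\sigma:=\lambda/p_1$. Third, verify that Adams' output exponent can be arranged to satisfy $p_1 > \frac{n}{n-t_1}$; a short arithmetic check reduces this to the inequality $\sigma < n-t_0$, which is harmless since $\sigma$ may be decreased at will.

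The main obstacle is purely the bookkeeping in the localization. Adams' theorem is a global statement about Morrey functions on $\R^n$, whereas Proposition~\ref{pr:Tt0morrey} only supplies the Morrey bound on balls well inside $B$. The cutoff trick creates an error term from the commutator of $\lapms{\alpha}$ with the cutoff, but since the cutoff is identically one on an even smaller subball, the relevant part of the error has disjoint supports from the data and is controlled by a tail integral of exactly the same type already handled in the proof of Proposition~\ref{pr:Tt0morrey}. Apart from this, the argument is a direct application of Adams' theorem to the Morrey input supplied by the previous proposition.
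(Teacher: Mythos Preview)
Your proposal is correct and follows exactly the route the paper takes: the paper's entire argument for this proposition is the one-sentence remark preceding it, namely use the factorization $T_{t_1,B}u=\lapms{t_1-t_0}T_{t_0,B}u$ from \cite{SchikorraCPDE14} together with Proposition~\ref{pr:Tt0morrey} and Adams' Riesz-potential estimates on Morrey spaces \cite{Adams75}. You in fact supply more detail than the paper (the localization via cutoff and the arithmetic check on $p_1$); just double-check the precise form of Adams' exponent formula against the convention in \cite{Adams75}, since several normalizations of $\mathcal{L}^{p,\lambda}$ are in circulation.
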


Now we exploit \eqref{eq:Ttbmot}: For any $\varphi \in C_c^\infty(\R^n)$
\[
 \plaps{s}{\frac{n}{s}}{B} u[\varphi] = \int_{\R^n} T_{t_1,B} u\ \laps{t_1} \varphi.
\]
Let $\varphi \in C_c^\infty(B_{\frac{1}{4}\rho})$ for $B_{\gamma \rho} \subset B$. With the usual cutoff-function $\eta \in C_c^\infty(B_{\rho})$, $\eta \equiv 1$ on $B_{\frac{1}{2}\rho}$
\[
 |\plaps{s}{\frac{n}{s}}{B} u[\varphi]| \aleq \| T_{t_1,B} u\|_{p_1,B_\rho} \|\laps{t} \varphi\|_{p_1',B_\rho} + \| T_{t_1,B} u\|_{\frac{n}{n-t},B_\rho} \|\laps{t_1} \varphi\|_{\frac{n}{t},\R^n \backslash B_{\frac{1}{2}\rho}}.
\]
By the Sobolev inequality for Gagliardo-Norms \cite[Theorem 1.6]{SchikorraCPDE14}, and the disjoint support \cite[Lemma A.1]{BPSknot12}, this implies
\[
 |\plaps{s}{\frac{n}{s}}{B} u[\varphi]| \aleq C_\Lambda [\varphi]_{W^{s+t_1-\frac{n}{p_1'},\frac{n}{s}}(\R^n)}.
\]
Since $p_1 > \frac{n}{n-t_1}$, we have $s+t_1-\frac{n}{p_1'} < s$, and the claim of Theorem~\ref{th:higherreg} follows from Theorem~\ref{th:est1} by a covering argument.\qed

\section{Compactness for $\frac{n}{s}$-harmonic maps: Proof of Theorem~\ref{th:aecompactness}}\label{s:th:aecompactness}
From the arguments in \cite[Proof of Lemma 2.3.]{DaLioBubbles} one has the following:
\begin{proposition}
For $s \in (0,1)$, $p \in (1,\infty)$ let $(u_k)_{k = 1}^\infty \in W^{s,p}(\R^n,\S^{N-1})$, $\Lambda := \sup_{k \in \N} [u_k]_{W^{s,p}(\R^n)} < \infty$ and $\eps_0 > 0$ given. 
Then up to a subsequence there is $u_\infty \in \dot{W}^{s,p}(\R^n,\S^{N-1})$ and a finite set of points $J = \{a_1,\ldots,a_l\}$ such that
\[ 
 u_k \rightharpoonup u_\infty\quad \mbox{in $W^{s,p}(\R^n,\S^{N-1})$ as $k \to \infty$},
\]
and for all $x \not \in J$ there is $r = r_x > 0$ so that
\[
 \limsup_{k \to \infty} [u_k]_{W^{s,p}(B_r(x))} < \eps_0.
\]
\end{proposition}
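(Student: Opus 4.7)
The plan is a standard concentration-compactness argument for the Gagliardo seminorm. First I would extract a weakly convergent subsequence and identify its limit: since $(u_k)$ is bounded in $\dot{W}^{s,p}(\R^n)$ and pointwise unimodular, a diagonal-subsequence argument combined with the Rellich-Kondrachov embedding $W^{s,p}_{\loc} \hookrightarrow \hookrightarrow L^p_{\loc}$ yields $u_k \rightharpoonup u_\infty$ in $\dot{W}^{s,p}$ together with $u_k \to u_\infty$ in $L^p_{\loc}$ and almost everywhere. Consequently $|u_\infty|=1$ a.e., so $u_\infty \in \dot{W}^{s,p}(\R^n,\S^{N-1})$.

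Next I introduce the concentration measure
\[
 \nu_k(A) := \int_A \int_{\R^n} \frac{|u_k(x)-u_k(y)|^p}{|x-y|^{n+sp}}\ dy\ dx,
\]
a nonnegative Radon measure on $\R^n$ of total mass at most $2\Lambda^p$. The point of this choice of density is that $\nu_k$ is a genuine measure on $\R^n$, while still dominating the seminorm on any Borel set via $\nu_k(A) \geq [u_k]_{W^{s,p}(A)}^p$. By Banach-Alaoglu, along a further subsequence $\nu_k \rightharpoonup^\ast \nu$ for some finite nonnegative Radon measure $\nu$.

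Define the bad set
\[
 J := \{ a \in \R^n : \nu(\{a\}) \geq \eps_0^p \}.
\]
Since $\nu$ is finite, $J = \{a_1,\ldots,a_l\}$ is finite with $l \leq 2\Lambda^p\eps_0^{-p}$. For $x \notin J$ one has $\nu(\{x\}) < \eps_0^p$, and by outer regularity I pick $r=r_x>0$ small enough that $\nu(\overline{B_r(x)}) < \eps_0^p$ and $\nu(\partial B_r(x)) = 0$ (all but countably many radii work). The portmanteau theorem for weak-$\ast$ convergence of positive Radon measures then gives
\[
 \limsup_{k \to \infty} [u_k]_{W^{s,p}(B_r(x))}^p \leq \limsup_{k \to \infty} \nu_k(\overline{B_r(x)}) \leq \nu(\overline{B_r(x)}) < \eps_0^p,
\]
which is the desired local smallness.

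The only real obstacle is the bookkeeping around the choice of $\nu_k$ so that it simultaneously controls the Gagliardo seminorm on balls from below and is a genuine $\sigma$-additive Radon measure on $\R^n$; once this is set up, the argument reduces to the classical observation that a finite positive measure has only finitely many atoms above any fixed positive threshold. Compatibility of weak convergence in $\dot{W}^{s,p}$ with the sphere constraint is handled automatically by the $L^p_{\loc}$ compactness.
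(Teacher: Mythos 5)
Your argument is correct and is exactly the concentration--compactness argument that the paper outsources to Da Lio's \cite[Lemma 2.3]{DaLioBubbles}; the paper gives no proof of its own, only the citation. The key device---replacing the super-additive quantity $[u_k]_{W^{s,p}(A)}^p$ by the genuinely $\sigma$-additive measure $\nu_k(A)=\int_A\int_{\R^n}|u_k(x)-u_k(y)|^p|x-y|^{-n-sp}\,dy\,dx$, which dominates $[u_k]_{W^{s,p}(A)}^p$ from above while having total mass $[u_k]_{W^{s,p}(\R^n)}^p\le\Lambda^p$---is precisely what makes the finite-atoms argument for $\nu$ go through, and your portmanteau step $\limsup_k\nu_k(\overline{B_r(x)})\le\nu(\overline{B_r(x)})$ is valid since closed balls are compact and the $\nu_k$ have uniformly bounded mass. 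Two cosmetic remarks: the total mass bound is $\Lambda^p$, not $2\Lambda^p$ (by Fubini $\nu_k(\R^n)=[u_k]^p_{W^{s,p}(\R^n)}$), and the arranging of $\nu(\partial B_r(x))=0$ is not needed for a $\limsup$ bound, only for full convergence $\nu_k(B_r(x))\to\nu(B_r(x))$. Neither affects correctness.
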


This, Theorem~\ref{th:higherreg} and the compactness of the embedding $W^{s+\delta,\frac{n}{s}}(B_r(x)) \hookrightarrow W^{s,\frac{n}{s}}(B_r(x))$ immediately implies that
\[
 u_k \xrightarrow{k \to \infty} u_\infty \quad \mbox{in $W^{s,\frac{n}{s}}_{loc}(\R^n \backslash J)$}.
\]
\qed.

\begin{appendix}
\section{Useful Tools}
The following Lemma is used to restrict the fractional $p$-Laplacian to smaller sets.
 \begin{lemma}[Localization Lemma]\label{la:extension}
Let $\Omega_1 \Subset \Omega_2 \Subset \Omega_3 \Subset \Omega \subset \R^n$ be open sets so that $\dist(\Omega_1,\Omega_2^c), \dist(\Omega_2,\Omega_3^c), \dist(\Omega_3,\Omega^c) > 0$. Let $s \in (0,1)$, $p \in [2,\infty)$.

For any $u \in W^{s,p}(\Omega)$ there exists $\tilde{u} \in W^{s,p}(\R^n)$ so that
\begin{enumerate}
 \item $\tilde{u} - u \equiv const$ in $\Omega_1$
 \item $\supp \tilde{u} \subset \Omega_2$
 \item $[\tilde{u}]_{W^{s,p}(\R^n)} \aleq\ [u]_{W^{s,p}(\Omega)}$
 \item For any $t \in (2s-1,s)$,
       \[
        \|\plaps{s}{p}{\Omega_3} \tilde{u}\|_{(W^{t,p}_0(\Omega_3))^\ast}  \aleq \|\plaps{s}{p}{\Omega} u\|_{(W^{t,p}_0(\Omega))^\ast} +  [u]_{W^{s,p}(\Omega)}^{p-1}.
       \]
\end{enumerate}
The constants are uniform in $u$ and depend only on $s,t,p$ and the sets $\Omega_1$, $\Omega_2$, $\Omega_3$, and $\Omega$.
\end{lemma}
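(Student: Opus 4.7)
The plan is to set $\tilde u := \eta\,(u - c)$, where $\eta \in C_c^\infty(\Omega_2)$ satisfies $\eta \equiv 1$ on $\Omega_1$ with $0 \leq \eta \leq 1$, and $c := (u)_{\Omega_2}$. Properties (1) and (2) follow immediately from this construction. For (3) I would split $[\tilde u]_{W^{s,p}(\R^n)}^p$ into its contributions on $\Omega_2 \times \Omega_2$ and $\Omega_2 \times (\R^n \setminus \Omega_2)$ (the remaining region contributes zero since $\tilde u \equiv 0$ outside $\Omega_2$). The diagonal part is controlled by the Leibniz-type bound $|\tilde u(x)-\tilde u(y)| \leq |u(x)-u(y)| + |u(y)-c|\,|\eta(x)-\eta(y)|$ together with the Lipschitz estimate for $\eta$ and the fractional Poincar\'e inequality $\|u-c\|_{L^p(\Omega_2)} \aleq [u]_{W^{s,p}(\Omega_2)}$; the off-diagonal part uses $\dist(\supp \eta, \partial \Omega_2) > 0$.

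The core of the argument is (4). Fix $\psi \in C_c^\infty(\Omega_3)$ with $[\psi]_{W^{t,p}(\R^n)} \leq 1$, set $v := u-c$ (so $\plaps{s}{p}{\Omega}u = \plaps{s}{p}{\Omega}v$ by constant-invariance), and abbreviate $K_f(x,y) := |f(x)-f(y)|^{p-2}(f(x)-f(y))\,|x-y|^{-n-sp}$. I would decompose
\[
\plaps{s}{p}{\Omega_3}\tilde u[\psi] - \plaps{s}{p}{\Omega}u[\psi] = I_{\mathrm{bulk}} - I_{\mathrm{tail}},
\]
with $I_{\mathrm{bulk}} := \int_{\Omega_3\times\Omega_3}(K_{\tilde u}-K_v)(\psi(x)-\psi(y))\,dx\,dy$ and $I_{\mathrm{tail}}$ the complementary integral on $(\Omega\times\Omega)\setminus(\Omega_3\times\Omega_3)$. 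Since $\psi \equiv 0$ outside $\Omega_3$, the tail reduces by symmetry to $2\int_{\Omega_3\times(\Omega\setminus\Omega_3)}K_v(x,y)\psi(x)\,dx\,dy$, which a $(p',p)$-H\"older combined with the Hardy-type bound $\int_{\Omega_3}|\psi|^p\,\dist(x,\partial\Omega_3)^{-tp}\,dx \aleq [\psi]_{W^{t,p}}^p$ controls by $C[u]_{W^{s,p}(\Omega)}^{p-1}\,[\psi]_{W^{t,p}(\R^n)}$.

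For $I_{\mathrm{bulk}}$ I would use the pointwise inequality $\bigl||a|^{p-2}a - |b|^{p-2}b\bigr| \aleq (|a|+|b|)^{p-2}|a-b|$, valid for $p\geq 2$, with $a = \tilde u(x)-\tilde u(y)$, $b = v(x)-v(y)$, and $a-b = w(x)-w(y)$ for $w := (\eta-1)v$. Algebraically, $w(x)-w(y) = (\eta(x)-1)(v(x)-v(y)) + v(y)(\eta(x)-\eta(y))$. For the second summand the Lipschitz bound $|\eta(x)-\eta(y)| \leq C|x-y|$ gains a full factor $|x-y|$, so H\"older with exponents $p/(p-2),\,p,\,p$ and the weight split $sp - 1 = s(p-2) + (2s-1-t) + t$ yields the estimate $[u]_{W^{s,p}}^{p-2}\,\|v\|_{L^p(\Omega_3)}\,[\psi]_{W^{t,p}}$. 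The hypothesis $t > 2s-1$ enters here to make the intermediate integral $\int_{\Omega_3}|x-y|^{-n-(2s-1-t)p}\,dx$ finite on the bounded domain. For the first summand, split by the position of $y$: on pairs with $y \in \Omega_1$ the bound $|\eta(x)-1| \leq L\,\dist(x,\Omega_1) \leq L|x-y|$ reproduces the $|x-y|$ gain and hence the same estimate.

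The hard part will be the remaining region $(\Omega_3\setminus\Omega_1)^2$, where neither $\eta-1$ nor $|x-y|$ yields an immediate pointwise gain. Here one must symmetrize via $|\eta(x)-1| \leq |\eta(y)-1| + L|x-y|$ and perform a careful interpolation between the Lipschitz regularity of $1-\eta$ and the $W^{s,p}$-regularity of $v$, trading the excess Sobolev order $(2s-t)-s = s-t < 1-s$ against the Lipschitz factor. This interpolation stays in the meaningful fractional range precisely when $2s-t < 1$, i.e., $t > 2s-1$, which is exactly where the hypothesis is used in a sharp way. Combining the bounds for $I_{\mathrm{bulk}}$ and $I_{\mathrm{tail}}$ with $|\plaps{s}{p}{\Omega}u[\psi]| \leq \|\plaps{s}{p}{\Omega}u\|_{(W^{t,p}_0(\Omega))^\ast}\,[\psi]_{W^{t,p}(\R^n)}$ gives the conclusion (4).
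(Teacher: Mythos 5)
Your construction of $\tilde u$ and the treatment of properties (1)--(3) and the tail term match the paper. The issue is in the core of (4).

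You compare $\plaps{s}{p}{\Omega_3}\tilde u[\psi]$ with $\plaps{s}{p}{\Omega}u[\psi]$, keeping the \emph{same} test function $\psi$, and apply the pointwise monotonicity bound with $a=\tilde u(x)-\tilde u(y)$, $b=v(x)-v(y)$, $a-b=w(x)-w(y)$, $w=(\eta-1)v$. You correctly identify that the summand $(\eta(x)-1)(v(x)-v(y))$ in $w(x)-w(y)$ has no pointwise gain on $(\Omega_3\setminus\Omega_1)^2$, and propose to close this with symmetrization and ``interpolation.'' This is where the argument breaks down. On $(\Omega_3\setminus\Omega_1)^2$ the factors $|\eta(x)-1|$ and $|\eta(y)-1|$ are both of order one, so symmetrizing gives nothing; combined with $(|a|+|b|)^{p-2}\gtrsim|v(x)-v(y)|^{p-2}$, the resulting integrand is of the full order $|u(x)-u(y)|^{p-1}|\psi(x)-\psi(y)|\,|x-y|^{-n-sp}$. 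Controlling that requires $\psi$ to have order $s$, not $t<s$, so it cannot be dominated by $[u]_{W^{s,p}}^{p-1}[\psi]_{W^{t,p}}$. Indeed, near a point $x_0$ with $\eta(x_0)=\tfrac12$ the leading behaviour of the difference is $\bigl((\tfrac12)^{p-1}-1\bigr)\plaps{s}{p}{}v[\psi]$, which is not a lower-order term at all: it has the size of the $p$-Laplacian itself. So the intended bound for the remainder is false as stated, and the heuristic interpolation cannot repair it.

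The fix --- and this is what the paper does --- is to change the test function together with the comparison term. Write $\tilde u(x)-\tilde u(y)=a(x,y)+b(x,y)$ with $a(x,y):=\eta(x)\,(u(x)-u(y))$ and $b(x,y):=(\eta(x)-\eta(y))\,(u(y)-c)$, so that \emph{both} error sources carry the Lipschitz gain of $\eta$. Then, with $T(\xi):=|\xi|^{p-2}\xi$, one has $T(a(x,y))=\eta^{p-1}(x)\,|u(x)-u(y)|^{p-2}(u(x)-u(y))$, and the factor $\eta^{p-1}(x)$ can be transferred onto the test function by
\[
\eta^{p-1}(x)\bigl(\varphi(x)-\varphi(y)\bigr)=\bigl(\eta^{p-1}(x)\varphi(x)-\eta^{p-1}(y)\varphi(y)\bigr)-\bigl(\eta^{p-1}(x)-\eta^{p-1}(y)\bigr)\varphi(y).
\]
This replaces $\plaps{s}{p}{\Omega}u[\varphi]$ by $\plaps{s}{p}{\Omega}u[\eta^{p-1}\varphi]$, which costs only $[\eta^{p-1}\varphi]_{W^{t,p}}\aleq[\varphi]_{W^{t,p}}$, and leaves exactly two remainders: the commutator $(\eta^{p-1}(x)-\eta^{p-1}(y))\varphi(y)$ and $T(a+b)-T(a)\aleq|b|\,(|a|^{p-2}+|b|^{p-2})$. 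Both have a genuine $|x-y|$-gain from the Lipschitz continuity of $\eta$, and your H\"older-plus-weight-split scheme (with $t>2s-1$) then closes the estimate everywhere, not just near $\Omega_1$. In short: keep the cutoff attached to the ``main'' term $a$ so it lands on the test function; comparing against $\plaps{s}{p}{\Omega}u[\psi]$ with the same $\psi$ leaves an irreducible large-error region.
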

\begin{proof}
Let $\Omega_1 \Subset \Omega$, let $\eta \equiv \eta_{\Omega_1} \in C_c^\infty(\Omega_2)$, $\eta_{\Omega_1} \equiv 1$ on $\Omega_1$. We set
\[
 \tilde{u} := \eta_{\Omega_1}(u-(u)_{\Omega_1}).
\]
Clearly $\tilde{u}$ satisfies property (1) and (2). We have property (3), too:
\[
 [\tilde{u}]_{W^{s,p}(\R^n)} \aleq [u]_{W^{s,p}(\Omega)}.
\]
We write
\[
 \tilde{u}(x)-\tilde{u}(y) = \underbrace{\eta(x) (u(x)-u(y))}_{a(x,y)} + \underbrace{(\eta(x)-\eta(y)) (u(y)-(u)_{\Omega_1})}_{b(x,y)}.
\]
Setting
\[
 T(a) := |a|^{p-2} a,
\]
observe that
\[
 |T(a+b) - T(a)| \aleq  |b| \brac{|a|^{p-2} + |b|^{p-2}}.
\]
Also note that
\[
 T(a(x,y)) = \eta^{p-1}(x) |u(x)-u(y)|^{p-2}(u(x)-u(y))
\]
We thus have for any $\varphi \in C_c^\infty(\Omega_3)$,
\[
\begin{split}
&\plaps{s}{p}{\Omega}\tilde{u}[\varphi]\\
 =&\int_{\Omega} \int_{\Omega} \frac{|\tilde{u}(x)-\tilde{u}(y)|^{p-2}(\tilde{u}(x)-\tilde{u}(y))\ (\varphi(x)-\varphi(y))}{|x-y|^{n+sp}}\ dx\ dy\\
 = &\int_{\Omega} \int_{\Omega} \frac{|u(x)-u(y)|^{p-2}(u(x)-u(y))\ \eta^{p-1}(x)\ (\varphi(x)-\varphi(y))}{|x-y|^{n+sp}}\ dx\ dy\\
 &+\int_{\Omega} \int_{\Omega} \frac{(T(a+b) - T(a))\ (\varphi(x)-\varphi(y))}{|x-y|^{n+sp}}\ dx\ dy\\
 = &\int_{\Omega} \int_{\Omega} \frac{|u(x)-u(y)|^{p-2}(u(x)-u(y))\ ( \eta^{p-1}(x) \varphi(x)- \eta^{p-1}(y) \varphi(y))}{|x-y|^{n+sp}}\ dx\ dy\\
  &-\int_{\Omega} \int_{\Omega} \frac{|u(x)-u(y)|^{p-2}(u(x)-u(y))\ (\eta^{p-1}(x)-\eta^{p-1}(y))\varphi(y)}{|x-y|^{n+sp}}\ dx\ dy\\
 &+\int_{\Omega} \int_{\Omega} \frac{(T(a+b) - T(a))\ (\varphi(x)-\varphi(y))}{|x-y|^{n+sp}}\ dx\ dy\\
 = &\plaps{s}{p}{\Omega}u[\eta^{p-1}\ \varphi]\\
  &-\int_{\Omega} \int_{\Omega} \frac{|u(x)-u(y)|^{p-2}(u(x)-u(y))\ (\eta^{p-1}(x)-\eta^{p-1}(y))\varphi(y)}{|x-y|^{n+sp}}\ dx\ dy\\
 &+\int_{\Omega} \int_{\Omega} \frac{(T(a+b) - T(a))\ (\varphi(x)-\varphi(y))}{|x-y|^{n+sp}}\ dx\ dy.
 \end{split}
 \]
So we have that
\[
\begin{split}
&| \plaps{s}{p}{\Omega} \tilde{u}[\varphi]|\\
\aleq & \|\plaps{s}{p}{\Omega}u\|_{(W^{t,p}_0(\Omega))^\ast}\ [\eta^{p-1} \varphi ]_{W^{t,p}(\Omega)}\\
&  + \int_{\Omega} \int_{\Omega} \frac{|u(x)-u(y)|^{p-1}\ |\eta^{p-1}(x)-\eta^{p-1}(y)|\ |\varphi(y)|}{|x-y|^{n+sp}}\ dx\ dy\\
 &+\int_{\Omega} \int_{\Omega} \frac{|\eta(x)-\eta(y)|\ |u(y)-(u)_{\Omega_1}|\ \eta(x)^{p-2}\ |u(x)-u(y)|^{p-2}\ |\varphi(x)-\varphi(y)|}{|x-y|^{n+sp}}\ dx\ dy\\
 &+\int_{\Omega} \int_{\Omega} \frac{|\eta(x)-\eta(y)|^{p-1}\ |u(y)-(u)_{\Omega_1}|^{p-1}\ |\varphi(x)-\varphi(y)|}{|x-y|^{n+sp}}\ dx\ dy.
 \end{split}
\]
That is for any $t < s$
\[
\begin{split}
&| \plaps{s}{p}{\Omega} \tilde{u}[\varphi]|\\
\aleq &  \|\plaps{s}{p}{\Omega}u\|_{(W^{t,p}_0(\Omega))^\ast}\ [\eta^{p-1} \varphi ]_{W^{t,p}(\Omega)}\\
&  + [u]_{W^{s,p}(\Omega)}^{p-1}\ \brac{\int_{\Omega} \int_{\Omega} \frac{|\eta^{p-1}(x)-\eta^{p-1}(y)|^p\ |\varphi(y)|^p}{|x-y|^{n+sp}}\ dx\ dy}^{\frac{1}{p}}\\
 &+[\varphi]_{W^{t,p}(\Omega)}\ [u]_{W^{s,p}(\Omega)}^{p-2} \brac{\int_{\Omega} \int_{\Omega_2} \frac{|\eta(x)-\eta(y)|^{p}\ |u(y)-(u)_{\Omega_1}|^p}{|x-y|^{n+(2s-t)p}}\ dx\ dy}^{\frac{1}{p}}\\
 &+[\varphi]_{W^{t,p}(\Omega)}  \brac{\int_{\Omega} \int_{\Omega_2} \frac{|\eta(x)-\eta(y)|^{p}\ |u(y)-(u)_{\Omega_1}|^p}{|x-y|^{n+(2s-t)p}}\ dx\ dy}^{\frac{p-1}{p}}.
 \end{split}
\]
Since $\eta$ is bounded and Lipschitz, $\supp \eta \subset \Omega_2$, and $\varphi \in C_c^\infty(\Omega_3)$ we have that
\[
 [\eta^{p-1} \varphi]_{W^{t,p}(\Omega)} \aleq [\varphi]_{W^{t,p}(\R^n)}.
\]
Also, choosing some bounded $\Omega_4 \Subset \Omega$ so that $\Omega_3 \Subset \Omega_4$,
\[
\begin{split}
  &\int_{\Omega} \int_{\Omega} \frac{|\eta^{p-1}(x)-\eta^{p-1}(y)|^p\ |\varphi(y)|^p}{|x-y|^{n+sp}}\ dx\ dy\\
\aleq  &\int_{\Omega_3} \int_{\Omega_4} |x-y|^{(1-s)p-n}\ \ dx\ |\varphi(y)|^p dy\\
&+\int_{\Omega_3} \int_{\R^n\backslash \Omega_4} |x-y|^{-n-sp}\ \ dx\ |\varphi(y)|^p dy\\
\aleq& \|\varphi\|_{p}^p \aleq [\varphi]_{W^{t,p}(\R^n)}^p.
\end{split}
  \]
Finally, using Lipschitz continuity of $\eta$ and that $2s-1 < t < s$
\[
\begin{split}
 &\int_{\Omega} \int_{\Omega_2} \frac{|\eta(x)-\eta(y)|^{p}\ |u(y)-(u)_{\Omega_1}|^p}{|x-y|^{n+(2s-t)p}}\ dx\ dy\\
\aleq &\int_{\Omega_3} |u(y)-(u)_{\Omega_1}|^p \int_{\Omega_2} |x-y|^{-n+(t+1-2s)p}\ dx\ dy\\
&+\int_{\Omega \backslash \Omega_3} |u(y)-(u)_{\Omega_1}|^p \int_{\Omega_2} \frac{1 }{|x-y|^{n+sp}}\ dx\ dy\\
\aleq &\int_{\Omega_1} \int_{\Omega_3} |u(y)-u(z)|^p\ dy\ dz\\
&+\int_{\Omega_1} \int_{\Omega \backslash \Omega_3} |u(y)-u(z)|^p \int_{\Omega_2} \frac{1 }{|x-y|^{n+sp}}\ dx\ dy\ dz\\
\end{split}
 \]
Note that for $x,z \in \Omega_2$ and $y \in \Omega_3^c$ we have that $|x-y| \aeq |y-z|$, and since $\Omega_1,\Omega_2,\Omega_3$ are bounded we then have
\[
\int_{\Omega} \int_{\Omega_2} \frac{|\eta(x)-\eta(y)|^{p}\ |u(y)-(u)_{\Omega_1}|^p}{|x-y|^{n+(2s-t)p}}\ dx\ dy \aleq [u]_{W^{s,p}(\Omega)}
\]
Thus we have shown that for any $\varphi \in C_c^\infty(\Omega_3)$,
\[
| \plaps{s}{p}{\Omega} \tilde{u}[\varphi]| \aleq  \brac{\|\plaps{s}{p}{\Omega}u\|_{(W^{t,p}_0(\Omega))^\ast} +[u]_{W^{s,p}(\Omega)}^{p-1}}\ [\varphi ]_{W^{t,p}(\R^n)}.
\]
Since moreover, $\supp \tilde{u} \subset \Omega_2$, for any $\varphi \in C_c^\infty(\Omega_3)$,
\[
| \plaps{s}{p}{\Omega_3} \tilde{u}[\varphi]| \aleq | \plaps{s}{p}{\Omega} \tilde{u}[\varphi]| + [u]_{W^{s,p}(\Omega)}^{p-1}\ [\varphi ]_{W^{t,p}(\R^n)},
\]
we get the claim.
\end{proof}

The next Lemma estimates the $W^{s,p}$-norm in terms of the fractional $p$-Laplacian. 
\begin{lemma}\label{la:lhs}
Let $B \subset \R^n$ be a ball and $4B$ the concentric ball with four times the radius. Then for any $\delta > 0$, $[u]_{W^{s,p}(B)}^{p} $ can be estimated by
\[
\begin{split}
& \delta^p [u]_{W^{s,p}(4B)}^{p}\\
&+ \frac{C}{\delta^{p'}} \brac{\sup_{\varphi} \int_{4B}\int_{4B} \frac{|u(x)-u(y)|^{p-2}(u(x)-u(y))\ (\varphi(x)-\varphi(y))}{|x-y|^{n+sp}}\ dx \ dy}^{\frac{p}{p-1}}\\
&+ \frac{C}{\delta^{p'}}\ \diam(B)^{-sp}\ \int \limits_{4B}  |u(x)-(u)_B|^p\ dx
\end{split}
\]
where the supremum is over all $\varphi \in C_c^\infty(2B)$ and $[\varphi]_{W^{s,p}(\R^n)} \leq 1$.
\end{lemma}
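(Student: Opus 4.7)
The plan is to test $\plaps{s}{p}{4B} u$ against
\[
 \varphi := \eta^p\,(u - (u)_B),
\]
where $\eta \in C_c^\infty(2B)$ is a standard cutoff with $\eta \equiv 1$ on $B$, $0 \leq \eta \leq 1$, and $\|\nabla \eta\|_\infty \aleq \diam(B)^{-1}$. Using the symmetric identity $ab - cd = \tfrac{1}{2}(a+c)(b-d) + \tfrac{1}{2}(a-c)(b+d)$ with $v := u - (u)_B$, one expands
\[
 (u(x)-u(y))(\varphi(x)-\varphi(y)) = \tfrac{\eta^p(x)+\eta^p(y)}{2}(u(x)-u(y))^2 + \tfrac{\eta^p(x)-\eta^p(y)}{2}(u(x)-u(y))(v(x)+v(y)).
\]
Multiplying by $|u(x)-u(y)|^{p-2}|x-y|^{-n-sp}$ and integrating over $4B \times 4B$ writes $\plaps{s}{p}{4B}u[\varphi]$ as a non-negative ``main term'' with density $\tfrac{\eta^p(x)+\eta^p(y)}{2}|u(x)-u(y)|^p$, which already dominates $[u]_{W^{s,p}(B)}^p$ since $\eta\equiv 1$ on $B$, plus a ``cutoff commutator'' error.

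The central estimate is to control that error. From $|\eta^p(x)-\eta^p(y)| \aleq (\eta^{p-1}(x)+\eta^{p-1}(y))|\eta(x)-\eta(y)|$ and the pointwise Young inequality $AB \leq \lambda A^{p'}/p' + \lambda^{-(p-1)} B^p/p$ applied with
\[
 A = \eta^{p-1}(x)\,|u(x)-u(y)|^{p-1}, \qquad B = |\eta(x)-\eta(y)|\,|v(y)|,
\]
one exploits the identity $A^{p'} = \eta^p(x)|u(x)-u(y)|^p$: the $A^{p'}$-part has the same shape as the main density and, taking $\lambda$ small, is absorbed into it, while the $B^p$-part integrates, using the standard cutoff bound $\int_{4B} |\eta(x)-\eta(y)|^p |x-y|^{-n-sp}\,dx \aleq \diam(B)^{-sp}$, to $\aleq \diam(B)^{-sp}\int_{4B}|u-(u)_B|^p$. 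After absorption one obtains
\[
 [u]_{W^{s,p}(B)}^p \aleq \plaps{s}{p}{4B}u[\varphi] + \diam(B)^{-sp}\int_{4B}|u-(u)_B|^p.
\]

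It then remains to pass from $\plaps{s}{p}{4B}u[\varphi]$ to the supremum $M$ appearing in the statement: by duality $\plaps{s}{p}{4B}u[\varphi] \leq M \cdot [\varphi]_{W^{s,p}(\R^n)}$, and a routine Leibniz-type estimate (using $\supp\varphi \subset 2B$ together with the Lipschitz bound on $\eta^p$) gives $[\varphi]_{W^{s,p}(\R^n)} \aleq [u]_{W^{s,p}(4B)} + \diam(B)^{-s}\|u-(u)_B\|_{L^p(4B)}$. The weighted Young inequality $M\cdot X \aleq \delta^p X^p + \delta^{-p'} M^{p'}$ applied to both summands then delivers exactly the three terms of the claim, at least for $\delta \in (0,1]$; for $\delta > 1$ the estimate is trivial.

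The main obstacle is the absorption in the error estimate: with the naive choice $\varphi = \eta(u-(u)_B)$, the analogous Young inequality would leave a term $\aleq \lambda [u]_{W^{s,p}(4B)}^p$ on the right, which cannot be absorbed into the \emph{local} $[u]_{W^{s,p}(B)}^p$ on the left. Raising $\eta$ to the $p$-th power in the test function is precisely what matches the error density to the main density $\eta^p(x)|u(x)-u(y)|^p$ and turns the estimate into a genuine absorption.
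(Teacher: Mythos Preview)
Your argument is correct and complete, but it proceeds differently from the paper's. The paper tests not with $\eta^p(u-(u)_B)$ but with $\varphi=\eta^2(u-(u)_B)$, after first introducing $\psi=\eta(u-(u)_B)$ and starting from the pointwise inequality
\[
[u]_{W^{s,p}(B)}^p \le \int_{4B}\int_{4B}\frac{|u(x)-u(y)|^{p-2}\,(\psi(x)-\psi(y))^2}{|x-y|^{n+sp}}\,dx\,dy.
\]
It then expands $(\psi(x)-\psi(y))^2$ algebraically into $(\varphi(x)-\varphi(y))(u(x)-u(y))$ plus two cross terms involving $(\eta(x)-\eta(y))$; those cross terms are bounded by H\"older (not pointwise Young) directly as $[u]_{W^{s,p}(4B)}^{p-1}$ times the lower-order quantity $\diam(B)^{-s}\|u-(u)_B\|_{L^p(4B)}$, and a single Young inequality at the very end produces the three terms. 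In particular, no absorption into the main density is needed, which is why $\eta^2$ suffices there. Your route is the classical Caccioppoli device for $p$-Laplace type operators: raising the cutoff to the $p$-th power makes the error density match the main density $\eta^p(x)|u(x)-u(y)|^p$ exactly, so that pointwise Young with a small parameter absorbs it. Both arguments are short; yours is perhaps more systematic and makes the mechanism (why the exponent on $\eta$ matters) more transparent, while the paper's choice of $\eta^2$ together with the auxiliary $\psi$ trades the absorption step for a slightly trickier algebraic identity.
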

\begin{proof}
Let $\eta \in C_c^\infty(2B)$, $\eta \equiv 1$ in $B$ be the usual cutoff function in $2B$.
\[
 \psi(x) := \eta(x) (u(x)-(u)_{B}), \quad \mbox{ and } \quad 
 \varphi(x) := \eta^2(x) (u(x)-(u)_{B}).
\]
Then,
\begin{equation}\label{eq:psivarphies}
 [\psi]_{W^{s,p}(\R^n)} + [\varphi]_{W^{s,p}(\R^n)} \aleq [u]_{W^{s,p}(2B)}.
\end{equation}

We have
\[
[u]_{W^{s,p}(B)}^{p} \leq \int \limits_{4B}\int \limits_{4B} \frac{|u(x)-u(y)|^{p-2} (\psi(x)-\psi(y))\ (\psi(x)-\psi(y))}{|x-y|^{n+sp}} dx\ dy
\]
Now we observe
\[
 \begin{split}
 (\psi(x) - \psi(y))^2
 =&(\psi(x) - \psi(y))  (\eta(x) - \eta(y))(u(x)-(u)_{B})\\
 & + \psi(x)(\eta(y)-\eta(x))\ (u(x)-u(y))\\
 & + (\varphi (x) - \varphi(y))(u(x)-u(y)).
 \end{split}
 \]

That is,
\[
 [u]_{W^{s,p}(B)}^{p} \aleq I + II + III,
\]
with
\[
 I :=\int \limits_{4B}\int \limits_{4B} \frac{|u(x)-u(y)|^{p-2} (u(x)-u(y))(\varphi(x)-\varphi(y))}{|x-y|^{n+sp}} dx\ dy,
 \]
\[
 II := \int \limits_{4B}\int \limits_{4B} \frac{|u(x)-u(y)|^{p-2} |\eta(x)-\eta(y)|\ |\psi(x)-\psi(y)|}{|x-y|^{n+sp}}\ |u(x)-(u)_B|\ dx\ dy,
 \]
\[ 
III := \int \limits_{4B}\int \limits_{4B} \frac{|u(x)-u(y)|^{p-1} |\eta(x)-\eta(y)|}{|x-y|^{n+sp}}\ |\psi(x)|  dx\ dy.
 \]
With \eqref{eq:psivarphies},
\[
 I \leq [u]_{W^{s,p}(4B)}\ \sup_{[\varphi]_{W^{s,p}(\R^n)} \leq 1} \int\limits_{4B}\int\limits_{4B} \frac{|u(x)-u(y)|^{p-2}(u(x)-u(y))\ (\varphi(x)-\varphi(y))}{|x-y|^{n+sp}}\ dx \ dy.
\]
As for $II$,
\[
II \aleq \|\nabla \eta\|_{\infty}\ \int \limits_{4B}\int \limits_{4B} \frac{|u(x)-u(y)|^{p-2} |\psi(x)-\psi(y)|\ |u(x)-(u)_B| }{|x-y|^{n+sp-1}}\ \ dx\ dy.
\]
For any $t_2 > 0$ so that $ t_2 = 1-s$, we have with H\"older's inequality 
\[
\begin{split}
II  \aleq &\|\nabla \eta\|_{\infty}\ \int \limits_{4B}\int \limits_{4B} \frac{|u(x)-u(y)|^{p-2} |\psi(x)-\psi(y)|\ |u(x)-(u)_B| }{|x-y|^{n+s(p-2) + s - t_2 }}\ \ dx\ dy\\
\aleq &\ \diam(B)^{-1}\ [u]_{W^{s,p}(4B)}^{p-2}\ [\psi]_{W^{s,p}(4B)}\  \brac{\int \limits_{4B}\int \limits_{4B}  \frac{ |u(x)-(u)_B|^p }{|x-y|^{n- t_2p}}\ \ dx\ dy}^{\frac{1}{p}}.
\end{split}
\]
Since $t_2 > 0$,
\[
 \int \limits_{4B}\int \limits_{4B}  \frac{ |u(x)-(u)_B|^p }{|x-y|^{n- t_2p}}\ \ dx\ dy \aleq (\diam B) ^{t_2p}\ \int \limits_{4B}  |u(x)-(u)_B|^p\ dx
\]
So using again \eqref{eq:psivarphies}, we arrive at
\[
 II \aleq \diam(B)^{-s}\ [u]_{W^{s,p}(4B)}^{p-1}\ \brac{\int \limits_{4B}  |u(x)-(u)_B|^p\ dx}^{\frac{1}{p}}.
\]
$III$ can be estimated the same way as $II$, and we have the following estimate for $[u]_{W^{s,p}(B)}^{p}$
\[
\begin{split}
&[u]_{W^{s,p}(4B)}\ \sup_{\varphi} \int\limits_{4B}\int\limits_{4B} \frac{|u(x)-u(y)|^{p-2}(u(x)-u(y))\ (\varphi(x)-\varphi(y))}{|x-y|^{n+sp}}\ dx \ dy\\
&+[u]_{W^{s,p}(4B)}^{p-1}\ \diam(B)^{-s}\ \brac{\int \limits_{4B}  |u(x)-(u)_B|^p\ dx}^{\frac{1}{p}}
\end{split}
\]
We conclude with Young's inequality.
\end{proof}

The next Proposition follows immediately from Jensen's inequality and the definition of $[u]_{W^{t,p}(\lambda B)}^p$.

\begin{proposition}[A Poincar\'e type inequality]\label{pr:poinc1}
Let $B$ be a ball and for $\lambda \geq 1$ let $\lambda B$ be the concentric ball with $\lambda$ times the radius. Then for any $t \in (0,1)$, $p \in (1,\infty)$,
\[
\int \limits_{\lambda B}  |u(x)-(u)_B|^p\ dx \aleq \lambda^{n+tp} \diam(B)^{t p} \ [u]_{W^{t,p}(\lambda B)}^p.
\]
\end{proposition}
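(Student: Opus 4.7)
The plan is to reduce the left-hand side to a double integral over $B \times \lambda B$ via Jensen's inequality, and then recognize that integrand as a truncated Gagliardo seminorm integrand after inserting and removing the weight $|x-y|^{n+tp}$.

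First I would write the mean as $(u)_B = |B|^{-1}\int_B u(y)\,dy$ and use Jensen's inequality in the form
\[
 |u(x)-(u)_B|^p = \left| |B|^{-1}\int_B (u(x)-u(y))\,dy\right|^p \leq |B|^{-1}\int_B |u(x)-u(y)|^p\,dy.
\]
Integrating in $x$ over $\lambda B$ gives
\[
\int_{\lambda B}|u(x)-(u)_B|^p\,dx \leq |B|^{-1}\int_{\lambda B}\int_{B} |u(x)-u(y)|^p\,dy\,dx.
\]

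Next I would exploit that for $x \in \lambda B$ and $y \in B \subset \lambda B$ one has $|x-y|\leq \lambda\,\diam(B)$, so
\[
 |u(x)-u(y)|^p \leq (\lambda\diam(B))^{n+tp}\ \frac{|u(x)-u(y)|^p}{|x-y|^{n+tp}}.
\]
Substituting this pointwise bound and enlarging the $y$-domain from $B$ to $\lambda B$ (only increases the right-hand side) yields
\[
\int_{\lambda B}|u(x)-(u)_B|^p\,dx \leq |B|^{-1}(\lambda\diam(B))^{n+tp}\ [u]_{W^{t,p}(\lambda B)}^p.
\]
Since $|B|\aeq \diam(B)^n$, the factor $|B|^{-1}(\lambda\diam(B))^{n+tp}$ simplifies to $\aleq \lambda^{n+tp}\diam(B)^{tp}$, which gives the claimed inequality.

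There is really no obstacle here: both steps are textbook. The only mild care is the crude bound $|x-y|\leq \lambda\diam(B)$, which uses $\lambda \geq 1$ and the fact that $B\subset \lambda B$, so $x,y$ both lie in the same ball $\lambda B$ of diameter $\lambda\diam(B)$.
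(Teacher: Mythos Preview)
Your proof is correct and matches the paper's approach exactly: the paper states that the proposition ``follows immediately from Jensen's inequality and the definition of $[u]_{W^{t,p}(\lambda B)}^p$,'' which is precisely your two-step argument.
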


\end{appendix}

\bibliographystyle{plain}%
\bibliography{bib}%

\end{document}